\newcommand{\Ni}{\mathds{N}}
\newcommand{\Ci}{\mathds{C}}
\newcounter{teller}
\newenvironment{tabel}{\begin{list}%
{\rm  (\alph{teller})\hfill}{\usecounter{teller} \leftmargin=1.1cm
\labelwidth=1.1cm \labelsep=0cm \parsep=0cm}
                      }{\end{list}}
\newcounter{tellerr}
\newenvironment{tabeleq}{\begin{list}%
{\rm  (\roman{tellerr})\hfill}{\usecounter{tellerr} \leftmargin=1.1cm
\labelwidth=1.1cm \labelsep=0cm \parsep=0cm}
                         }{\end{list}}
\newcounter{tellerrr}
\newenvironment{tabelR}{\begin{list}%
{\rm  (\Roman{tellerrr})\hfill}{\usecounter{tellerrr} \leftmargin=1.1cm
\labelwidth=1.1cm \labelsep=0cm \parsep=0cm}
                         }{\end{list}}
\newtheorem{theorem}{Theorem}[section]
\newtheorem{lemma}[theorem]{Lemma}
\newtheorem{proposition}[theorem]{Proposition}
\theoremstyle{definition}
\newtheorem{definition}[theorem]{Definition}
\newtheorem{exam}[theorem]{Example}
\newtheorem{rem}[theorem]{Remark}
\theoremstyle{remark}
\numberwithin{equation}{section}
\newcommand{\field}[1]{\mathbb{#1}}
\newcommand{\R}{\field{R}}
\DeclareMathOperator{\dom}{dom}    %domain of an operator
\DeclareMathOperator{\supp}{supp}  %support of a function
\def\eqnarray{\stepcounter{equation}\let\@currentlabel=\theequation
\global\@eqnswtrue
\tabskip\@centering\let\\=\@eqncr
$$\halign to \displaywidth\bgroup\hfil\global\@eqcnt\z@
  $\displaystyle\tabskip\z@{##}$&\global\@eqcnt\@ne
  \hfil$\displaystyle{{}##{}}$\hfil
  &\global\@eqcnt\tw@ $\displaystyle{##}$\hfil
  \tabskip\@centering&\llap{##}\tabskip\z@\cr}
\def\endeqnarray{\@@eqncr\egroup
      \global\advance\c@equation\m@ne$$\global\@ignoretrue}
\def\@yeqncr{\@ifnextchar [{\@xeqncr}{\@xeqncr[5pt]}}
\newcommand{\ca}{{\mathcal{A}}}
\newcommand{\cd}{{\mathcal{D}}}
\newcommand{\cf}{{\mathcal{F}}}
\newcommand{\cl}{{\mathcal{L}}}
\begin{document}
\bibliographystyle{tom}

\thispagestyle{empty}

\vspace*{1cm}
\begin{center}
{\large\bf Consistent operator semigroups and their interpolation} \\[5mm]
\large  A.F.M. ter Elst and J. Rehberg

\end{center}

\vspace{5mm}

\begin{list}{}{\leftmargin=1.8cm \rightmargin=1.8cm \listparindent=10mm 
   \parsep=0pt}
\item
\small
{\sc Abstract}.
Under a mild regularity condition 
we prove that the generator of the interpolation of two $C_0$-semigroups is 
the interpolation of the two generators.
\end{list}

\let\thefootnote\relax\footnotetext{
\begin{tabular}{@{}l}
{\em Mathematics Subject Classification}. 46B70, 46M35, 47D06.\\
{\em Keywords}. Semigroups, interpolation.
\end{tabular}}

\section{Introduction} \label{Scoin1}

Interpolation is a main tool in parabolic differential equations and in 
particular in semigroup theory, see 
\cite{BB},  \cite[Section~1.13]{Tri} and \cite[Chapter 2]{Lun}.
Frequently interpolation is done between two $L^p$-spaces or between 
a Banach space and the domain of a power of the generator of a semigroup.
The aim of this paper is to consider abstractly interpolation of continuous
semigroups, from the viewpoint of category theory.
In one of the main theorems of this paper, Theorem~\ref{tcoin104}, we
show that the generator of the interpolation of two $C_0$-semigroups is 
the interpolation of the two generators.
As a corollary this gives the following theorem for complex interpolation.

\begin{theorem} \label{tcoin130}
Let $(X,\ca,\mu)$ be a $\sigma$-finite measure space and let $p_0,p_1 \in [1,\infty)$.
Let $S^{(p_0)}$ and $S^{(p_1)}$ be bounded consistent $C_0$-semigroups in $L^{p_0}$
and $L^{p_1}$ with generators $-A_{p_0}$ and $-A_{p_1}$, respectively. 
Let $\theta \in [0,1]$ and let $p \in [1,\infty)$ be such that 
$\frac{1}{p} = \frac{1-\theta}{p_0} + \frac{\theta}{p_1}$.
Let $S^{(p)}$ be the $C_0$-semigroup on $L^p$ which is consistent with $S^{(p_0)}$.
Let $-A_p$ be the generator of $S^{(p)}$.
Then 
\[
 [D(A_{p_0}),D(A_{p_1})]_\theta = D(A_p)
{}.  \]
\end{theorem}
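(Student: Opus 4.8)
The plan is to obtain this statement as a corollary of the abstract result \thmref{tcoin104}, specialised to the complex interpolation functor $[\,\cdot\,,\cdot\,]_\theta$ and the compatible couple $(L^{p_0},L^{p_1})$. First I would observe that $(L^{p_0},L^{p_1})$ is a Banach couple: both spaces embed continuously into the space of $\mu$-measurable functions (equivalently into $L^{p_0}+L^{p_1}$), so that the intersection $L^{p_0}\cap L^{p_1}$ and the complex interpolation space are well defined. By the Calder\'{o}n interpolation theorem for $L^p$-spaces over a $\sigma$-finite measure space,
\[
 [L^{p_0},L^{p_1}]_\theta = L^p ,
 \qquad \frac1p=\frac{1-\theta}{p_0}+\frac{\theta}{p_1},
\]
with equal norms. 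This identifies the target of the functor with $L^p$.

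Next I would check that the consistency hypothesis places the semigroups squarely in the setting of \thmref{tcoin104}. For each fixed $t\ge0$ the operators $S^{(p_0)}_t$ and $S^{(p_1)}_t$ agree on $L^{p_0}\cap L^{p_1}$, so the pair $(S^{(p_0)}_t,S^{(p_1)}_t)$ is a morphism of the couple $(L^{p_0},L^{p_1})$ into itself; functoriality of complex interpolation produces an induced bounded operator on $[L^{p_0},L^{p_1}]_\theta=L^p$, and these induced operators inherit the semigroup law. \thmref{tcoin104} then applies and yields that the interpolated family is a $C_0$-semigroup on $L^p$ whose generator $-B$ satisfies $D(B)=[D(A_{p_0}),D(A_{p_1})]_\theta$.

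It remains to identify this interpolated semigroup with the prescribed $S^{(p)}$. Since $L^{p_0}\cap L^{p_1}$ is dense in $L^p$ (simple functions supported on sets of finite measure lie in every $L^q$ with $q<\infty$ and are dense there) and the interpolated semigroup coincides with $S^{(p_0)}$ on that intersection, the interpolated semigroup is consistent with $S^{(p_0)}$. By the uniqueness of the consistent $C_0$-semigroup on $L^p$ it therefore equals $S^{(p)}$, so $B=A_p$ and hence $D(A_p)=[D(A_{p_0}),D(A_{p_1})]_\theta$, as desired.

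I expect the main obstacle to be the verification, in this concrete situation, of the mild regularity condition required by \thmref{tcoin104}---in essence the strong continuity of the interpolated family on $L^p$. The boundedness assumed of $S^{(p_0)}$ and $S^{(p_1)}$ together with the density of $L^{p_0}\cap L^{p_1}$ in $L^p$ should supply it: one first checks strong continuity on the dense intersection, where the interpolated orbit equals the $S^{(p_0)}$-orbit, and then propagates it using a uniform operator-norm bound obtained by interpolating the two given bounds. The endpoint cases $\theta\in\{0,1\}$ merit a brief separate remark, since there the complex interpolation space is only the closure of $L^{p_0}\cap L^{p_1}$ in $L^{p_j}$; density of the intersection in each $L^{p_j}$ again secures the identification with $L^p=L^{p_j}$.
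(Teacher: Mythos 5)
Your proposal is correct and follows exactly the route the paper intends: Theorem~\ref{tcoin130} is stated there as a corollary of Theorem~\ref{tcoin104} applied to the complex interpolation functor (which has Property~(d) by Example~\ref{xcoin301}), together with the identification $[L^{p_0},L^{p_1}]_\theta = L^p$ and the identification of the interpolated semigroup with $S^{(p)}$ via density of $L^{p_0}\cap L^{p_1}$ in $L^p$. The strong continuity you flag as the ``main obstacle'' is already supplied by Proposition~\ref{pcoin1030}\ref{pcoin1030-2} inside the hypotheses of Theorem~\ref{tcoin104}, so no extra work is needed there.
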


In order to illustrate the abstract setting of the paper we give an example
in non-linear parabolic equations, where the appropriate interpolation is 
not between two $L^p$-spaces or between a Banach space and a power of a 
semigroup generator.
Consider the quasilinear initial boundary value problem
\begin{equation} \label{e-gleichiN}
u' - \nabla \cdot \phi(u)  \nabla u  =  |\nabla u|^2, 
   \quad \nu \cdot \nabla u|_{\partial \Omega}=f \neq 0,   
   \quad  u(0)=0.
\end{equation}
on a three-dimensional (possibly nonsmooth) domain $\Omega$, where 
$u \in L^s((0,T);X)$. 
We wish to find a suitable Banach space $X$ for the treatment of this initial 
boundary value problem.
First, if the boundary conditions are replaced by a Dirichlet condition,
then $L^p$-spaces are an adequate choice for $X$ in order to treat \eqref{e-gleichiN}, 
see \cite{MERS} and \cite{HiebR}.
In view of the \emph{inhomogeneous}
Neumann conditions the $L^p$-spaces are not suitable
(cf.\ \cite[Section~1.2]{cia} \cite[Subsection~II.2.2]{GGZ}).
Secondly, at a first glance, the choice $X=W^{-1,2}(\Omega)$, which is the dual of $W^{1,2}(\Omega)$, 
seems to be adequate for the problem \eqref{e-gleichiN}, as in the linear
case, compare \cite[Section 3.3]{Lio3}.
With this choice, however, for every fixed $t$ the function $u(t,\cdot)$ is 
then in general an element of $W^{1,2}(\Omega)$ and hence 
$|\nabla u(t,\cdot)|^2$ is then an element of $L^1(\Omega)$ and fails to be an
element of $X$, as required by the differential equation in~(\ref{e-gleichiN}).
Thirdly, replacing $W^{-1,2}(\Omega)$ by the smaller space $W^{-1,q}(\Omega)$,
with $q$ larger than the spatial dimension~$3$, and under the condition 
that $-\Delta+I \colon W^{1,q}(\Omega) \to W^{-1,q}(\Omega)$ is a topological isomorphism 
(cf.\ \cite{Zanger} and \cite{HHKRZ}) one
could now guess that $X:=W^{-1,q}(\Omega)$ is a good space to treat \eqref{e-gleichiN}.
Indeed, one could then reflect the inhomogeneous Neumann condition adequately.
Moreover, since $u(t,\cdot) \in \dom(\Delta) = W^{1,q}(\Omega)$ 
one deduces that $|\nabla u(t,\cdot)|^2 \in L^{q/2}(\Omega) \subset W^{-1,q}(\Omega)$,
where we used that $q$ is larger than the space dimension~$3$.
This means that $|\nabla u(t,\cdot)|^2 \in X$ for every element $u(t,\cdot)$ in the 
domain of the elliptic
operator and each $t > 0$.
But, unfortunately, the theory for quasi-linear parabolic equations requires more: 
for elements from an interpolation space between the
Banach space and the domain of the elliptic operator the right hand side 
$|\nabla u|^2$ of the differential equation in~(\ref{e-gleichiN})
has to be 
well-behaved in order to assure at least (local in time) existence and uniqueness, 
see for example \cite[Chapter 7]{Lun}.
But for elements $v$ of an interpolation space between $W^{-1,q}(\Omega)$ and
 $W^{1,q}(\Omega)$, the gradient $\nabla v$ cannot be expected to be a function in general, and 
$|\nabla v|^2$ cannot be defined 
properly for such elements (for the required interpolation arguments see \cite{GGKR}).

In order to get out of this dilemma it turns out that, quite in coincidence with 
the concept in \cite{ClementSimonett}, that 
one should take for $X$ a Banach space which fulfils the following three properties.
\begin{tabelR}
\item \label{Scoin1-1}
$\bigl( -\Delta +I\bigl) ^{-\alpha} \colon X \to W^{1,q}$ is \emph{continuous}
for some $\alpha \in (0,1)$ and $q > 3$,
\item \label{Scoin1-2}
$|\nabla \psi|^2 \in X$ for all $\psi \in W^{1,q}$ and 
\item \label{Scoin1-3}
$-\Delta$ generates on $X$ a (suitably regular) semigroup.
\end{tabelR}

In \cite[Section 7]{HaR} a comprehensive treatment for $\eqref{e-gleichiN}$ 
is given in a (dual) Bessel function space $H^{-\tau,q}=[L^q,W^{-1,q}]_\theta$ with 
$\tau \in (\frac {3}{q},1)$.
But in the meanwhile it turned out that passing to spaces $[L^p,W^{-1,q}]_\theta$ with
$p \neq q$ gives the theory more flexibility - and sharper results. 
In particular, $p=\frac {q}{2}$ is
of special interest, see \cite{BonifaciusNeitzelRehberg}.
 The reason of this are the
better multiplier properties in the expression $\nabla \cdot \phi \nabla$ in the 
dependence of $\phi$.
Note that then the space $X=[L^p,W^{-1,q}]_\theta$ is generally \emph{not}
an interpolation space between the general Banach space $W^{-1,q}$
and the corresponding domain of the Laplacian (as a reference operator) on $W^{-1,q}$.
It turns out that indeed Properties~\ref{Scoin1-1}, \ref{Scoin1-2} and \ref{Scoin1-3} 
are satisfied when taking $X=[L^p,W^{-1,q}]_\theta$ and $p$ is suitably chosen. 
In particular, the semigroup generator property for the operators
$-\nabla \cdot \phi \nabla$ on $X=[L^p,W^{-1,q}]_\theta$
is expected to follow from the generator property on both $L^p$ and $W^{-1,q}$ by interpolation.
Indeed, this is fairly clear in case of 
\emph{analytic} semigroups, but not in general.
This is one of our motivations to investigate
interpolation properties for suitable operator semigroups in a more general context.

Since one is usually interested in a concrete equation,
 one must first know that  the two semigroups act consistently on 
the involved Banach spaces.
In Section~\ref{Scoin2} we characterise consistency of semigroups in terms 
of their resolvents and we obtain a useful expression for the intersection
of the domain of the generators.
In Section~\ref{Scoin3} we consider interpolation functors and prove 
loosely speaking that semigroup generators and interpolation functors
commute.
In the last section we give a couple of examples in $L^p$-spaces and 
distribution spaces for consistent semigroups.

\section{Consistency of operator semigroups} \label{Scoin2}

In this section we show that two $C_0$-semigroups are consistent if and only 
if the resolvents of the generators are consistent for large $\lambda > 0$.
We start we the definition of consistent operators.

\begin{definition} \label{dcoin210}
Let $X$ and $Y$ be two vector spaces.
Let $T_0 \colon D(T_0) \to Y$ and $T_1 \colon D(T_1) \to Y$ be two (linear) operators
with domains $D(T_0) \subset X$ and $D(T_1) \subset X$.
Then the operators $T_0$ and $T_1$ are called {\bf consistent} if $T_0 x = T_1 x$
for all $x \in D(T_0) \cap D(T_1)$.
Let $X_0$ and $X_1$ be two Banach spaces which are embedded in a vector space~$X$.
Let $S^{(0)}$ and $S^{(1)}$ be semigroups in $X_0$ and $X_1$, respectively. 
Then the semigroups $S^{(0)}$ and $S^{(1)}$ are called 
{\bf consistent} if $S^{(0)}_t$ and $S^{(1)}_t$ are consistent 
for all $t > 0$.
\end{definition}

The following easy lemma gives a sufficient condition for two bounded 
operators to be consistent.

\begin{lemma} \label{lcoic211}
Let $(X_0, X_1)$ be an interpolation couple of Banach spaces.
Let $T_0$ and $T_1$ be bounded operators in $X_0$ and $X_1$, respectively.
Let $D \subset X_0 \cap X_1$ and suppose that $D$ is dense in $X_0 \cap X_1$.
Further, suppose that $T_0 x = T_1 x$ for all $x \in D$.
Then $T_0$ and $T_1$ are consistent.
\end{lemma}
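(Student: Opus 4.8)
The plan is to establish the identity $T_0 x = T_1 x$ for every $x \in X_0 \cap X_1$ by a density-and-continuity argument: approximate $x$ by elements of the dense set $D$, where the two operators are already known to agree, and then pass to the limit. The only structural input beyond boundedness is that an interpolation couple sits inside a common ambient space, and this is exactly what lets the two limits be identified.

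First I would recall the standard facts about an interpolation couple $(X_0,X_1)$: both spaces are continuously embedded in a common Hausdorff topological vector space $X$, and the intersection $X_0 \cap X_1$ is itself a Banach space under the norm $\norm{x}_{X_0 \cap X_1} = \max\{\norm{x}_{X_0},\norm{x}_{X_1}\}$. Consequently, convergence in $X_0 \cap X_1$ is equivalent to simultaneous convergence in $X_0$ and in $X_1$. Note also that, in the sense of \defnref{dcoin210}, the relevant common target is $Y = X$, that $D(T_0) = X_0$ and $D(T_1) = X_1$, so that $D(T_0) \cap D(T_1) = X_0 \cap X_1$, and consistency is precisely the statement $T_0 x = T_1 x$ in $X$ for all such $x$.

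Next, fix $x \in X_0 \cap X_1$. By density of $D$ in $X_0 \cap X_1$ there is a sequence $(x_n)$ in $D$ with $x_n \to x$ in $X_0 \cap X_1$, hence $x_n \to x$ in $X_0$ and $x_n \to x$ in $X_1$ simultaneously. Since $T_0$ is bounded on $X_0$ and $T_1$ is bounded on $X_1$, this yields $T_0 x_n \to T_0 x$ in $X_0$ and $T_1 x_n \to T_1 x$ in $X_1$. Because each $x_n$ lies in $D$, the hypothesis gives $T_0 x_n = T_1 x_n$ for every $n$.

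Finally I would transport both convergences into the ambient space $X$: the continuous injections $X_0 \hookrightarrow X$ and $X_1 \hookrightarrow X$ give $T_0 x_n \to T_0 x$ and $T_1 x_n \to T_1 x$ in $X$. Since $T_0 x_n = T_1 x_n$ for all $n$ and $X$ is Hausdorff, the two limits must coincide, so $T_0 x = T_1 x$ in $X$; as $x$ was an arbitrary element of $X_0 \cap X_1 = D(T_0) \cap D(T_1)$, the operators are consistent. I expect the main (indeed only) delicate point to be this last identification of the limits: a priori $T_0 x$ is produced as a limit in $X_0$ and $T_1 x$ as a limit in $X_1$, and it is exactly the continuity of the embeddings into the common Hausdorff space $X$ that guarantees these two limits represent the same element. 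Everything else is routine.
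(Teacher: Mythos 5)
Your argument is correct and complete: approximate $x \in X_0 \cap X_1$ by elements of $D$, use boundedness of $T_0$ and $T_1$ to pass to the limit in $X_0$ and $X_1$ respectively, and identify the two limits via the continuous embeddings into the common Hausdorff ambient space. The paper omits the proof entirely (it labels the lemma ``easy''), and what you wrote is precisely the standard density-and-continuity argument the authors have in mind.
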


The boundedness condition on the semigroups in the sequel is just for convenience.

\begin{lemma} \label{lcoin101}
Let $(X_0, X_1)$ be an interpolation couple of Banach spaces.
Let $S^{(0)}$ and $S^{(1)}$ be bounded $C_0$-semigroups in $X_0$ and
$X_1$ with generators $-A_0$ and $-A_1$, respectively. 
Then the following are equivalent.
\begin{tabeleq}
\item \label{lcoin101-1}
The semigroups $S^{(0)}$ and $S^{(1)}$ are consistent.
\item \label{lcoin101-2}
For all $\lambda > 0$ the resolvent operators 
$(A_0 + \lambda \, I)^{-1}$ and $(A_1 + \lambda \, I)^{-1}$
are consistent.
\end{tabeleq}
\end{lemma}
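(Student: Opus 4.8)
The plan is to route both implications through the Laplace-transform representation of the resolvent, using the structure of the interpolation couple. Recall that $(X_0,X_1)$ being an interpolation couple means $X_0$ and $X_1$ are continuously embedded in a common Hausdorff topological vector space~$X$; every equality and every limit that I establish inside $X_0$ or inside $X_1$ can then be transported into $X$ via these continuous linear embeddings, and $X$ is where consistency is actually tested. Since each $S^{(j)}$ is a bounded $C_0$-semigroup, there is $M_j$ with $\norm{S^{(j)}_t} \le M_j$, so for every $\lambda > 0$ and every $x \in X_j$ the Bochner integral $\int_0^\infty e^{-\lambda t} S^{(j)}_t x \, dt$ converges in $X_j$ and represents $(A_j + \lambda \, I)^{-1} x$. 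This one representation drives both directions.

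For \ref{lcoin101-1}$\Rightarrow$\ref{lcoin101-2}, I would fix $\lambda > 0$ and $x \in X_0 \cap X_1$. Consistency of the semigroups gives $S^{(0)}_t x = S^{(1)}_t x$ in $X$ for every $t > 0$. A bounded (indeed continuous) linear embedding commutes with the Bochner integral, so pushing both resolvent integrals forward into $X$ and using that the integrands coincide there yields $(A_0 + \lambda \, I)^{-1} x = (A_1 + \lambda \, I)^{-1} x$ in $X$, which is precisely consistency of the resolvents.

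For \ref{lcoin101-2}$\Rightarrow$\ref{lcoin101-1}, write $R_j(\lambda) = (A_j + \lambda \, I)^{-1}$. The first step is to observe that resolvent consistency is stable under iteration: if $x \in X_0 \cap X_1$ then $R_0(\lambda) x = R_1(\lambda) x$, and since the left member lies in $X_0$ and the right in $X_1$, their common value again lies in $X_0 \cap X_1$; by induction $R_0(\lambda)^n x = R_1(\lambda)^n x \in X_0 \cap X_1$ for all $n$. I then invoke the exponential (Euler) formula, valid for each $t > 0$ because the growth bounds are nonpositive,
\[
 S^{(j)}_t x = \lim_{n\to\infty} \Bigl( \tfrac{n}{t}\, R_j\bigl(\tfrac{n}{t}\bigr) \Bigr)^n x ,
\]
the limit holding in $X_j$. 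For each fixed $n$ the $n$-th terms for $j=0$ and $j=1$ agree in $X$ by the previous step, and convergence in $X_j$ implies convergence in $X$; hence the two limits coincide in $X$, giving $S^{(0)}_t x = S^{(1)}_t x$. As $t > 0$ and $x \in X_0 \cap X_1$ were arbitrary, the semigroups are consistent.

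I expect the only real delicacy — rather than a genuine obstacle — to be the bookkeeping across the three topologies: one must argue throughout in the ambient space $X$ and must justify interchanging the continuous embeddings with Bochner integration in the forward direction and with the strong limit in the exponential formula in the backward direction. Both interchanges rest on nothing more than linearity and continuity of the embeddings, but they are exactly the places where a careless argument would illegitimately try to compare elements of $X_0$ and $X_1$ directly. An alternative to the Euler formula in \ref{lcoin101-2}$\Rightarrow$\ref{lcoin101-1} would be uniqueness of the vector-valued Laplace transform, but that requires enough separating functionals on $X$ and is less self-contained, so I would keep the Euler-formula route.
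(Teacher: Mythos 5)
Your proposal is correct and follows essentially the same route as the paper: the forward direction via the Laplace-transform representation of the resolvent, and the backward direction via induction on resolvent powers followed by the Euler exponential formula. The additional care you take with the ambient space and the interchange of embeddings with integrals and limits is sound (and your version of the Euler formula, $S^{(j)}_t x = \lim_{n\to\infty} \bigl(\tfrac{n}{t}\,(A_j+\tfrac{n}{t}I)^{-1}\bigr)^n x$, is in fact written more precisely than the paper's).
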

\begin{proof}
`\ref{lcoin101-1}$\Rightarrow$\ref{lcoin101-2}'.
Let $\lambda > 0$ and $x \in X_0  \cap X_1$.
Then 
\[
(A_0 + \lambda \, I)^{-1}x 
= \int_0^\infty e^{-\lambda t} \, S^{(0)}_t x \, dt
= \int_0^\infty e^{-\lambda t} \, S^{(1)}_t x \, dt 
= (A_1 + \lambda \, I)^{-1} x.
\]

`\ref{lcoin101-2}$\Rightarrow$\ref{lcoin101-1}'.
Let $\lambda > 0$ and $x \in X_0 \cap X_1$.
Then it follows by induction to $n$ that 
$(A_0 + \lambda \, I)^{-n} x = (A_1 + \lambda \, I)^{-n} x$
for all $n \in \Ni$.
Now let $t > 0$ and $x \in X_0 \cap X_1$.
Then the Euler formula gives
\[
S^{(0)}_t x
= \lim_{n \to \infty} (A_0 + \tfrac{t}{n} \, I)^{-n} x
= \lim_{n \to \infty} (A_1 + \tfrac{t}{n} \, I)^{-n} x
= S^{(1)}_t x,
\]
as required.
\end{proof}

\begin{rem} \label{r-resolv}
In \cite{Are2} Proposition~2.2 the following is proved: 
the set $\mathcal U$ of all $\lambda$ for which
$(A_0 + \lambda \, I)^{-1}$ and $(A_1 + \lambda \, I)^{-1}$ are consistent, is open and closed in
$\rho(-A_0) \cap \rho(-A_1)$. 
From this it easily follows that if $\rho(-A_0)=\rho(-A_1)$ and this set 
is connected, then the consistency of $(A_0 + \lambda_0 \, I)^{-1}$ and 
$(A_1 + \lambda_0 \, I)^{-1}$
for \emph{only one} $\lambda_0$ implies the consistency of all resolvent operators.
\end{rem}

If the equivalent conditions in Lemma~\ref{lcoin101} are valid, then 
it is possible that there exists a $\lambda \in \rho(-A_0) \cap \rho(-A_1)$
such that the resolvents 
$(A_0 + \lambda \, I)^{-1}$ and $(A_1 + \lambda \, I)^{-1}$
are not consistent.
An example has been given in \cite{Are2} Section~3.

\begin{proposition} \label{pcoin102}
Let $(X_0, X_1)$ be an interpolation couple of Banach spaces.
Let $S^{(0)}$ and $S^{(1)}$ be bounded consistent $C_0$-semigroups in $X_0$ and
$X_1$ with generators $-A_0$ and $-A_1$, respectively. 
Then one has the following.
\begin{tabel}
\item \label{pcoin102-1}
The generators $A_0$ and $A_1$ are consistent.
\item \label{pcoin102-2}
$D(A_0) \cap D(A_1) = \{ x \in D(A_0) \cap X_1 : A_0 x \in X_1 \}
= (A_0 + I)^{-1}(X_0 \cap X_1)$.
\end{tabel}
\end{proposition}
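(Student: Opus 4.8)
The plan is to prove \partref{pcoin102-1} by a limiting argument at the level of the semigroups and then to read off \partref{pcoin102-2} as a short set-theoretic chase resting on \partref{pcoin102-1} together with the resolvent consistency supplied by \lemref{lcoin101}. For \partref{pcoin102-1}, fix $x \in D(A_0) \cap D(A_1) \subseteq X_0 \cap X_1$. Consistency of the semigroups means $S^{(0)}_t x = S^{(1)}_t x$ in the ambient space $X$ for every $t > 0$, so $\frac{1}{t}(S^{(0)}_t x - x)$ and $\frac{1}{t}(S^{(1)}_t x - x)$ are literally one and the same element of $X$. As $t \downarrow 0$ the former tends to $-A_0 x$ in $X_0$ and the latter to $-A_1 x$ in $X_1$, since $x$ lies in both domains. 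Because $X_0$ and $X_1$ are continuously embedded in the Hausdorff space $X$, these are both limits in $X$ of the same net, and uniqueness of limits in $X$ forces $A_0 x = A_1 x$.

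For \partref{pcoin102-2} I write $S_1 = D(A_0)\cap D(A_1)$, $S_2 = \{x \in D(A_0)\cap X_1 : A_0 x \in X_1\}$ and $S_3 = (A_0+I)^{-1}(X_0\cap X_1)$, and establish the cyclic chain $S_1 \subseteq S_2 \subseteq S_3 \subseteq S_1$, which forces all three to coincide. The inclusion $S_1 \subseteq S_2$ is immediate from \partref{pcoin102-1}: an $x \in S_1$ satisfies $x \in X_1$ and $A_0 x = A_1 x \in X_1$. For $S_2 \subseteq S_3$, an $x \in S_2$ has $(A_0+I)x = A_0 x + x \in X_0 \cap X_1$, whence $x = (A_0+I)^{-1}(A_0+I)x \in S_3$. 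Finally, for $S_3 \subseteq S_1$, write $x = (A_0+I)^{-1} y$ with $y \in X_0 \cap X_1$; then $x \in D(A_0)$, and the resolvent consistency from \lemref{lcoin101} gives $x = (A_1+I)^{-1} y \in D(A_1)$, so $x \in S_1$.

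Two housekeeping points should be isolated before this computation. Since each $-A_i$ generates a bounded $C_0$-semigroup, one has $1 \in \rho(-A_0) \cap \rho(-A_1)$, so $(A_0+I)^{-1}$ and $(A_1+I)^{-1}$ are everywhere defined on $X_0$ and $X_1$ and the image defining $S_3$ makes sense; and \lemref{lcoin101} converts the hypothesised consistency of the semigroups into consistency of exactly these resolvents, which is what the last inclusion uses. The only genuinely non-formal step is the limit identification in \partref{pcoin102-1}: it is precisely there that the interpolation-couple structure (continuous embeddings into a common Hausdorff space), rather than a merely algebraic embedding, is indispensable, and I expect this to be the sole real obstacle. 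Everything in \partref{pcoin102-2} is then bookkeeping.
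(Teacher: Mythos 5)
Your proof is correct, and while Part~\ref{pcoin102-1} follows essentially the paper's route, Part~\ref{pcoin102-2} is organised genuinely differently. For Part~\ref{pcoin102-1} the paper also identifies the limits of the common difference quotient, but does so by testing against functionals $F \in (X_0+X_1)'$ rather than invoking uniqueness of limits in the ambient Hausdorff space; the two devices are interchangeable here. For Part~\ref{pcoin102-2} the paper proves the first equality directly: the hard inclusion $\{x \in D(A_0)\cap X_1 : A_0x \in X_1\} \subseteq D(A_0)\cap D(A_1)$ is obtained by writing $(I-S^{(1)}_t)x = \int_0^t S^{(1)}_s A_0x\,ds$ (transferring the identity from $S^{(0)}$ to $S^{(1)}$ via functionals) and passing to the limit $t\downarrow 0$ in $X_1$ to conclude $x \in D(A_1)$; the second equality is then proved separately. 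Your cyclic chain $S_1 \subseteq S_2 \subseteq S_3 \subseteq S_1$ bypasses that integral computation entirely, replacing it with the resolvent consistency of Lemma~\ref{lcoin101}, which is shorter and arguably cleaner; each inclusion checks out (in particular $(A_0+I)x = A_0x + x \in X_0 \cap X_1$ for $x \in S_2$, and $(A_0+I)^{-1}y = (A_1+I)^{-1}y \in D(A_1)$ for $y \in X_0\cap X_1$). What the paper's longer argument buys is a self-contained proof of the first equality at the level of the semigroups alone; what yours buys is economy, at the harmless price of making the first equality depend on the resolvent machinery.
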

\begin{proof}
`\ref{pcoin102-1}'.
Let $x \in D(A_0) \cap D(A_1)$ and $F \in (X_0 + X_1)'$.
Then 
\[
F(A_0 x)
= \lim_{t \downarrow 0} \tfrac{1}{t} F((I - S^{(0)})x)
= \lim_{t \downarrow 0} \tfrac{1}{t} F((I - S^{(1)})x)
= F(A_1 x)
 .  \]
Hence $A_0 x = A_1 x$.

`\ref{pcoin102-2}'.
Let $x \in D(A_0) \cap D(A_1)$.
Then it follows from Statement~\ref{pcoin102-1} that 
$A_0 x = A_1 x \in X_1$.
So $D(A_0) \cap D(A_1)
\subset \{ x \in D(A_0) \cap X_1 : A_0 x \in X_1 \} $.
Conversely, suppose $x \in D(A_0) \cap X_1$ and $A_0 x \in X_1$.
Let $t > 0$.
Then for all $F \in (X_0 + X_1)'$ one deduces that
\begin{eqnarray*}
F\Big( (I - S^{(1)}_t) x \Big)
& = & F\Big( (I - S^{(0)}_t) x \Big)
= F\Big( \int_0^t S^{(0)}_s \, A_0 x \, ds \Big)
= \int_0^t F\Big( S^{(0)}_s \, A_0 x \Big) \, ds  \\
& = & \int_0^t F\Big( S^{(1)}_s \, A_0 x \Big) \, ds
= F\Big( \int_0^t S^{(1)}_s \, A_0 x \, ds \Big) .
\end{eqnarray*}
So 
\[
\tfrac{1}{t} (I - S^{(1)}_t) x
= \tfrac{1}{t} \int_0^t S^{(1)}_s \, A_0 x \, ds
\]
in $X_1$.
Hence 
\[
\lim_{t \downarrow 0} \tfrac{1}{t} (I - S^{(1)}_t) x
= A_0 x
\]
in $X_1$.
Therefore $x \in D(A_1)$.
This proves the first equality in Statement~\ref{pcoin102-2}.

Next, let $x \in D(A_0) \cap D(A_1)$.
Then $(A_0 + I) x = (A_1 + I) x \in X_0 \cap X_1$
again by Statement~\ref{pcoin102-1}.
So $x \in (A_0 + I)^{-1}(X_0 \cap X_1)$.
Conversely, let $x \in X_0 \cap X_1$.
Then obviously $(A_0 + I)^{-1} u \in D(A_0)$.
Since $(A_0 + I)^{-1}$ and $(A_1 + I)^{-1}$ are consistent by 
Lemma~\ref{lcoin101}, it follows that 
$(A_0 + I)^{-1} x = (A_1 + I)^{-1} x \in D(A_1)$.
So $(A_0 + I)^{-1} x \in D(A_0) \cap D(A_1)$.
\end{proof}

\section{Interpolation of consistent operator semigroups} \label{Scoin3}

In this section we consider interpolation of semigroups and their generators.
In all what follows, we adopt the terminology of \cite{Tri} Section~1.2, 
with minor modifications. 

Let $(X_0,X_1)$ and $(Y_0,Y_1)$ be two interpolation couples of Banach spaces.
Recall from \cite{Tri} Subsection~1.2.2 that 
$L( (X_0,X_1) , (Y_0,Y_1) )$ denotes the vector space of all 
linear maps $T \colon X_0 + X_1 \to Y_0 + Y_1$ such that 
$T|_{X_0} \in \cl(X_0,Y_0)$ and $T|_{X_1} \in \cl(X_1,Y_1)$.
Clearly the operators $T|_{X_0}$ and $T|_{X_1}$ are consistent for all 
$T \in L( (X_0,X_1) , (Y_0,Y_1) )$.
There is a converse.

\begin{lemma} \label{l-induce}
Let $(X_0,X_1)$ and $(Y_0,Y_1)$ be two interpolation couples of Banach spaces,
$T_0 \in \cl(X_0,Y_0)$ and $T_1 \in \cl(X_1,Y_1)$.
Suppose that $T_0$ and $T_1$ are consistent.
Then there exists a unique $T \in L( (X_0,X_1) , (Y_0,Y_1) )$
such that $T|_{X_0} = T_0$ and $T|_{X_1} = T_1$.

Moreover, the operator $T$ is continuous from $X_0 + X_1$ into $Y_0 + Y_1$
and $\|T\|_{X_0 + X_1 \to Y_0 + Y_1} \leq \|T_0\|_{X_0 \to Y_0} \vee \|T_1\|_{X_1 \to Y_1}$.
\end{lemma}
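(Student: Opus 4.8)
The plan is to define $T$ directly on the sum $X_0 + X_1$ by splitting each element into its two components. Given $x \in X_0 + X_1$, I would choose any decomposition $x = x_0 + x_1$ with $x_0 \in X_0$ and $x_1 \in X_1$, and set $Tx := T_0 x_0 + T_1 x_1 \in Y_0 + Y_1$. The decisive step is to verify that this is well defined, i.e.\ independent of the chosen decomposition. If $x_0 + x_1 = x_0' + x_1'$ are two such representations, then $w := x_0 - x_0' = x_1' - x_1$ lies in $X_0 \cap X_1$, and the consistency of $T_0$ and $T_1$ gives $T_0 w = T_1 w$. Since $x_1 - x_1' = -w$, this yields $T_0 x_0 + T_1 x_1 - (T_0 x_0' + T_1 x_1') = T_0 w - T_1 w = 0$, so $Tx$ is unambiguous. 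This is the only place where the consistency hypothesis enters, and it is the heart of the argument; everything else is routine.

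Linearity of $T$ is immediate, since decompositions add and scale componentwise. For the restriction properties, any $x \in X_0$ admits the decomposition $x = x + 0$, whence $Tx = T_0 x$; thus $T|_{X_0} = T_0$, and symmetrically $T|_{X_1} = T_1$. In particular $T|_{X_0} = T_0 \in \cl(X_0,Y_0)$ and $T|_{X_1} = T_1 \in \cl(X_1,Y_1)$, so $T \in L((X_0,X_1),(Y_0,Y_1))$ as required. Uniqueness follows because any $T' \in L((X_0,X_1),(Y_0,Y_1))$ with the prescribed restrictions must satisfy $T'(x_0+x_1) = T_0 x_0 + T_1 x_1$ for every decomposition, forcing $T' = T$.

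For the norm estimate I would use that the sum-norm is $\norm{x}_{X_0+X_1} = \inf\{\norm{x_0}_{X_0} + \norm{x_1}_{X_1} : x = x_0+x_1\}$, and similarly on the target. Writing $M := \norm{T_0}_{X_0 \to Y_0} \vee \norm{T_1}_{X_1 \to Y_1}$, for any decomposition $x = x_0 + x_1$ one has
\[
\norm{Tx}_{Y_0+Y_1} \le \norm{T_0 x_0}_{Y_0} + \norm{T_1 x_1}_{Y_1} \le M\bigl(\norm{x_0}_{X_0} + \norm{x_1}_{X_1}\bigr).
\]
Taking the infimum over all decompositions of $x$ gives $\norm{Tx}_{Y_0+Y_1} \le M \norm{x}_{X_0+X_1}$, which yields both the continuity of $T$ and the claimed bound $\norm{T}_{X_0+X_1 \to Y_0+Y_1} \le M$. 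I expect no genuine obstacle beyond correctly invoking consistency in the well-definedness step.
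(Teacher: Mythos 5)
Your proof is correct and takes essentially the same route as the paper: the paper also defines $T(x_0+x_1)=T_0x_0+T_1x_1$, invokes consistency for well-definedness, and obtains the norm bound by taking the infimum over decompositions. You have merely spelled out the well-definedness step that the paper calls ``easy''.
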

\begin{proof}
The first part is easy and $T \in L( (X_0,X_1) , (Y_0,Y_1) )$ is given 
by $T(x_0 + x_1) = T_0 x_0 + T_1 x_1$ for all $x_0 \in X_0$ and $x_1 \in X_1$.
Here we use that $T_0$ and $T_1$ are consistent.

Next, let $x \in X_0 + X_1$.
Let $x_0 \in X_0$ and $x_1 \in X_1$ be such that $x = x_0 + x_1$.
Then
\begin{eqnarray*}
\|T x\|_{X_0 + X_1}
& \leq & \|T_0 x_0\|_{X_0} + \|T_1 x_1\|_{X_1}  \\
& \leq & ( \|T_0\|_{X_0 \to Y_0} \vee \|T_1\|_{X_1 \to Y_1}) (\|x_0\|_{X_0} + \|x_1\|_{X_1})  
 .
\end{eqnarray*}
So $\|T x\|_{Y_0 + Y_1} 
\leq ( \|T_0\|_{X_0 \to Y_0} \vee \|T_1\|_{X_1 \to Y_1}) \|x\|_{X_0 + X_1} $.
This proves the last assertion.
\end{proof}

Let $(X_0,X_1)$ and $(Y_0,Y_1)$ be two interpolation couples of Banach spaces.
We provide $L( (X_0,X_1) , (Y_0,Y_1) )$ with the norm
\[
\|T\|_{L( (X_0,X_1) , (Y_0,Y_1) )} 
= \|T|_{X_0}\|_{X_0 \to Y_0} \vee \|T|_{X_1}\|_{X_1 \to Y_1}
 .  \]
Then $L( (X_0,X_1) , (Y_0,Y_1) )$ is a Banach space.
For the concept of interpolation functor we refer to \cite{Tri} Subsection~1.2.2.
If $\cf$ is an interpolation functor and $T \in L( (X_0,X_1) , (Y_0,Y_1) )$,
then we denote by $T^\cf \colon \cf(X_0,X_1) \to \cf(Y_0,Y_1)$ the restriction 
of $T$ to $\cf(X_0,X_1)$. 
Note that $T^\cf$ is a bounded operator. 
Alternatively, since we are interested in consistent operators, we also introduce
another notation.
Let $T_0 \in \cl(X_0,Y_0)$ and $T_1 \in \cl(X_1,Y_1)$.
Suppose that $T_0$ and $T_1$ are consistent.
By Lemma~\ref{l-induce} there exists a unique $T \in L( (X_0,X_1) , (Y_0,Y_1) )$
such that $T|_{X_0} = T_0$ and $T|_{X_1} = T_1$.
Then we define 
\[
\cf(T_0,T_1) = T^\cf
 .  \]
So $\cf(T_0,T_1)$ is a bounded operator from $\cf(X_0,X_1)$ into $\cf(Y_0,Y_1)$.
Since $T_0$, $T_1$ and $\cf(T_0,T_1) = T^\cf$ are all three restrictions of the 
same operator $T$ on $X_0 + X_1$, it is obvious that the three operators
$T_0$, $T_1$ and $\cf(T_0,T_1) = T^\cf$ are pairwise consistent.

\begin{lemma} \label{lcoin303}
Let $(X_0,X_1)$ and $(Y_0,Y_1)$ be two interpolation couples of Banach spaces and 
$\cf$ an interpolation functor.
Then there exists an $M > 0$ such that 
\[
\|T^\cf\|_{ \cf(X_0,X_1) \to \cf(Y_0,Y_1) }
\leq M \, \|T\|_{ L( (X_0,X_1) , (Y_0,Y_1) ) }
\]
for all $T \in L( (X_0,X_1) , (Y_0,Y_1) )$.
\end{lemma}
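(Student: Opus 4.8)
The plan is to recognise the assignment $T \mapsto T^\cf$ as a \emph{linear} map between two Banach spaces and then to invoke the closed graph theorem; the whole content of the lemma is that the boundedness of each individual $T^\cf$, which is built into the very definition of an interpolation functor, can be upgraded to a bound that is uniform in $T$. Concretely, write $\Phi(T) = T^\cf$. By the definition of an interpolation functor, $T^\cf$ is a bounded operator from $\cf(X_0,X_1)$ into $\cf(Y_0,Y_1)$ for every $T \in L((X_0,X_1),(Y_0,Y_1))$, so $\Phi$ is a well-defined linear map into $\cl(\cf(X_0,X_1),\cf(Y_0,Y_1))$, which is a Banach space since it consists of bounded operators between Banach spaces. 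Because $L((X_0,X_1),(Y_0,Y_1))$ is itself a Banach space (as noted just before the lemma), it suffices to check that $\Phi$ has closed graph; the closed graph theorem then yields the desired constant $M = \|\Phi\|$.

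To verify the closed graph hypothesis I would take a sequence $T_n$ with $T_n \to T$ in $L((X_0,X_1),(Y_0,Y_1))$ and $\Phi(T_n) = T_n^\cf \to S$ in $\cl(\cf(X_0,X_1),\cf(Y_0,Y_1))$, and show $S = T^\cf$. The idea is to transport both convergences into the common ambient space $Y_0 + Y_1$ and use uniqueness of limits there. First, Lemma~\ref{l-induce} gives $\|T_n - T\|_{X_0+X_1 \to Y_0+Y_1} \leq \|T_n - T\|_{L((X_0,X_1),(Y_0,Y_1))}$, so $T_n \to T$ in operator norm as maps $X_0 + X_1 \to Y_0 + Y_1$. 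Now fix $x \in \cf(X_0,X_1)$. Since $\cf(X_0,X_1)$ embeds continuously in $X_0 + X_1$, the element $x$ lies in $X_0 + X_1$, and $T_n^\cf x = T_n x$ is simply the value of the restriction; hence $\|T_n^\cf x - T^\cf x\|_{Y_0+Y_1} \leq \|T_n - T\|_{X_0+X_1\to Y_0+Y_1}\,\|x\|_{X_0+X_1} \to 0$, so $T_n^\cf x \to T^\cf x$ in $Y_0 + Y_1$.

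On the other hand, $T_n^\cf \to S$ in $\cl(\cf(X_0,X_1),\cf(Y_0,Y_1))$ gives $T_n^\cf x \to S x$ in $\cf(Y_0,Y_1)$, and since $\cf(Y_0,Y_1)$ embeds continuously in $Y_0 + Y_1$, also $T_n^\cf x \to S x$ in $Y_0 + Y_1$. Uniqueness of limits in $Y_0 + Y_1$ then forces $S x = T^\cf x$, and as $x \in \cf(X_0,X_1)$ was arbitrary we conclude $S = T^\cf = \Phi(T)$. Thus $\Phi$ has closed graph, the closed graph theorem applies, and its norm is the constant $M$ claimed.

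The argument is essentially routine once this framework is in place; the one point requiring care — and the step I expect to be the main (if minor) obstacle — is the consistent handling of the two distinct topologies, namely comparing the $L$-norm convergence of the operators with the $\cf$-norm convergence of their restrictions inside the single space $Y_0 + Y_1$. This is exactly where the intermediate-space property of the interpolation functor (the continuous embeddings $\cf(X_0,X_1)\hookrightarrow X_0+X_1$ and $\cf(Y_0,Y_1)\hookrightarrow Y_0+Y_1$) together with Lemma~\ref{l-induce} enter, and it is what ensures the closed-graph condition is genuinely verified rather than assumed.
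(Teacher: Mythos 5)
Your proof is correct and follows exactly the paper's route: the paper's entire proof is the one-line observation that $T \mapsto T^\cf$ has a closed graph as a map between the Banach spaces $L((X_0,X_1),(Y_0,Y_1))$ and $\cl(\cf(X_0,X_1),\cf(Y_0,Y_1))$, and your verification of the closed-graph condition (comparing limits inside the ambient space $Y_0+Y_1$ via the continuous embeddings and Lemma~\ref{l-induce}) correctly supplies the details the paper leaves implicit.
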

\begin{proof}
The operator $T \mapsto T^\cf$ from the Banach space $L( (X_0,X_1) , (Y_0,Y_1) )$ into 
the Banach space $\cl( \cf(X_0,X_1) , \cf(Y_0,Y_1) )$ has a closed graph.
\end{proof}

In several contexts \emph{dual} semigroups are of interest, see the papers
\cite{Ama}, \cite{AmE}.
Therefore it makes sense to establish a connection between consistency of operators 
and consistency of their adjoints.

\begin{proposition} \label{p-adjoint}
Let $(X_0,X_1)$ and $(Y_0,Y_1)$ be two interpolation couples of Banach spaces,
$T_0 \in \cl(X_0,Y_0)$ and $T_1 \in \cl(X_1,Y_1)$.
Suppose that $T_0$ and $T_1$ are consistent.
Let $T \in L((X_0,X_1),(Y_0,Y_1))$ be such that $T|_{X_0} = T_0$ and 
$T|_{X_1} = T_1$.
Then one has the following.
\begin{tabel}
\item \label{pcoin66-1}
$T'=T_0'|_{(Y_0 +Y_1)'} = T_1'|_{(Y_0 +Y_1)'}$.

\item \label{pcoin67-1}
If $Y_0 \cap Y_1$ is dense in both spaces $Y_0$ and $Y_1$, then
\begin{equation} \label{e-coooi}
(Y_0 + Y_1)' = Y_0' \cap Y_1'
\end{equation}
and, consequently,
\begin{equation} \label{e-restri}
T'=T_0'|_{ Y_0' \cap Y_1'} = T_1'|_{ Y_0' \cap Y_1'}.
\end{equation}
\end{tabel}
\end{proposition}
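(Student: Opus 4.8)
The plan is to prove the two parts in order, using the characterisation of the induced operator $T$ from Lemma~\ref{l-induce} as the common extension of $T_0$ and $T_1$ to $X_0 + X_1$, together with standard duality for sums and intersections of Banach spaces.

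For Part~\ref{pcoin66-1}, the key observation is that $T' \colon (Y_0+Y_1)' \to (X_0+X_1)'$ is by definition the Banach-space adjoint of the bounded operator $T \colon X_0+X_1 \to Y_0+Y_1$ supplied by Lemma~\ref{l-induce}. I would fix $G \in (Y_0+Y_1)'$ and show that $T'G$, as a functional on $X_0+X_1$, agrees with $T_0'(G|_{Y_0})$ on $X_0$ and with $T_1'(G|_{Y_1})$ on $X_1$; this is immediate from $\dual{T'G}{x} = \dual{G}{Tx}$ and the fact that $Tx = T_0 x$ for $x \in X_0$ and $Tx = T_1 x$ for $x \in X_1$. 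The mild subtlety is to read the statement correctly: the restriction $(Y_0+Y_1)'$ on $T_0'$ means that we restrict the functional $G \in (Y_0+Y_1)'$ to $Y_0$ before applying $T_0'$, using the canonical embedding $(Y_0+Y_1)' \hookrightarrow Y_0'$ (and similarly $\hookrightarrow Y_1'$). Once that reading is fixed, the identity is a one-line computation pairing against arbitrary $x \in X_0 + X_1$.

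For Part~\ref{pcoin67-1}, I would first establish the duality formula~\eqref{e-coooi}. The inclusion $(Y_0+Y_1)' \hookrightarrow Y_0' \cap Y_1'$ always holds by restriction; the content is the reverse direction under the density hypothesis. Here I would invoke the classical result (see \cite{Tri} Subsection~1.2.1 or the Gagliardo duality theorem) that when $Y_0 \cap Y_1$ is dense in both $Y_0$ and $Y_1$, the natural map identifies $(Y_0+Y_1)'$ isometrically with $Y_0' \cap Y_1'$: a pair of compatible functionals $(G_0,G_1) \in Y_0' \cap Y_1'$ agrees on the dense subspace $Y_0 \cap Y_1$, hence defines a single functional on $Y_0+Y_1$ that is bounded for the sum norm. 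Given~\eqref{e-coooi}, formula~\eqref{e-restri} follows by simply rewriting Part~\ref{pcoin66-1} with the domain $(Y_0+Y_1)'$ replaced by the now-identified space $Y_0' \cap Y_1'$, since under this identification $G|_{Y_0} = G$ and $G|_{Y_1} = G$.

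The step I expect to be the main obstacle is the duality identity~\eqref{e-coooi}, specifically checking that the density of $Y_0 \cap Y_1$ is exactly what is needed to glue a compatible pair of functionals into a well-defined bounded functional on the sum (and to guarantee the map is injective, so that the identification is clean). Everything else reduces to bookkeeping with the adjoint definition and the consistency of $T_0$ and $T_1$. I would therefore state~\eqref{e-coooi} as a citation to \cite{Tri} and spend the bulk of the argument making the restriction conventions in~\eqref{e-restri} explicit.
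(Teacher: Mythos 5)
Your proposal is correct and follows essentially the same route as the paper: part~\ref{pcoin66-1} is the same one-line adjoint computation $\langle T_0'f,x\rangle=\langle f,T_0x\rangle=\langle f,Tx\rangle=\langle T'f,x\rangle$ checked for $x\in X_0$ (and analogously for $X_1$), and part~\ref{pcoin67-1} is obtained by citing the classical duality $(Y_0+Y_1)'=Y_0'\cap Y_1'$ under the density hypothesis (the paper uses \cite{BL} Theorem~2.7.1 where you cite \cite{Tri}) and then rereading part~\ref{pcoin66-1}. The only slip is your remark about ``pairing against arbitrary $x\in X_0+X_1$'': since $T_0'(G|_{Y_0})$ lives in $X_0'$, the identity is to be verified only for $x\in X_0$, exactly as you state earlier in the paragraph.
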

\begin{proof}
`\ref{pcoin66-1}'.
 Clearly the adjoint $T'$ of $T$ is a continuous operator from 
$\bigl( Y_0 +Y_1)' $ into $\bigl( X_0 +X_1)'$. 
Let $f  \in \bigl( Y_0+Y_1\bigl) ' \subset Y_0'$ and $x \in X_0 \subset X_0 +X_1$.
Then
\[
\langle T_0' f,x \rangle_{X_0' \times X_0} 
=\langle  f,T_0x \rangle_{Y_0' \times Y_0} 
= \langle  f,Tx \rangle_{(Y_0+Y_1)' \times (Y_0+Y_1)}
=\langle  T'f,x \rangle_{(X_0+X_1)' \times (X_0+X_1)}.
\]
The second equality is proved analogously.

`\ref{pcoin67-1}'.
 Under the density condition, the equality \eqref{e-coooi} is well-known,
cf.\ \cite{BL} Theorem~2.7.1.
Then (\ref{e-restri}) follows from \ref{pcoin66-1}.
\end{proof}

\begin{definition} \label{d-dtype}
We say that an interpolation functor $\mathcal F$ has {\bf Property~(d)} (for dense)
if for every interpolation couple $(X_0,X_1)$ the subspace
$X_0 \cap X_1$ is \emph{dense} in the interpolation space $\mathcal F(X_0,X_1)$.
\end{definition}

\begin{exam} \label{xcoin301}
The complex interpolation has Property~(d).
With exception of the limit values also the real interpolation has Property~(d).
For complex and real interpolation, see \cite{Tri} Subsections~1.9.3 and~1.6.2.
\end{exam}

\begin{exam} \label{xcoin302}
The real interpolation with parameters the limit values does not have
Property~(d), see \cite{Tri} Remark~1.18.3.5.
\end{exam}

The next lemma is easy to prove.

\begin{lemma} \label{lcoin320}
Let $(X_0,X_1)$ and $(Y_0,Y_1)$ be two interpolation couples of Banach spaces and 
$\cf$ an interpolation functor which has Property~(d).
Let $T_0 \in \cl(X_0,Y_0)$,  $T_1 \in \cl(X_1,Y_1)$ and suppose 
that $T_0$ and $T_1$ are consistent.
Then $\cf(T_0,T_1)$ is the {\em unique} 
extension of the operator $T|_{X_0 \cap X_1} \colon X_0 \cap X_1 \to Y_0 \cap Y_1$ 
which is continuous from the space $\cf(X_0,X_1)$ into the space $\cf(Y_0,Y_1)$.
\end{lemma}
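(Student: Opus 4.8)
The plan is to establish two things: first that $\cf(T_0,T_1)$ is an extension of $T|_{X_0 \cap X_1}$ which is continuous from $\cf(X_0,X_1)$ into $\cf(Y_0,Y_1)$, and second that it is the unique such extension. The existence part is essentially built into the construction. Recall that $\cf(T_0,T_1) = T^\cf$ is by definition the restriction of $T$ to $\cf(X_0,X_1)$, and $T^\cf$ is a bounded operator from $\cf(X_0,X_1)$ into $\cf(Y_0,Y_1)$ as already noted in the text. Since $T$ restricts to $T_0$ on $X_0$ and $T_1$ on $X_1$, its restriction to $X_0 \cap X_1$ agrees with $T|_{X_0 \cap X_1}$; moreover, for $x \in X_0 \cap X_1$ we have $Tx = T_0 x \in Y_0$ and $Tx = T_1 x \in Y_1$, so $Tx \in Y_0 \cap Y_1$, which justifies that $T|_{X_0 \cap X_1}$ indeed maps into $Y_0 \cap Y_1$. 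Thus $\cf(T_0,T_1)$ is a continuous extension of the desired kind.

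For uniqueness I would invoke Property~(d) directly. By hypothesis $\cf$ has Property~(d), so $X_0 \cap X_1$ is dense in $\cf(X_0,X_1)$. Suppose $U, V \colon \cf(X_0,X_1) \to \cf(Y_0,Y_1)$ are two continuous operators both extending $T|_{X_0 \cap X_1}$. Then $U$ and $V$ agree on the dense subspace $X_0 \cap X_1$, and since both are continuous into the Banach space $\cf(Y_0,Y_1)$, a standard density argument forces $U = V$ on all of $\cf(X_0,X_1)$. This gives uniqueness.

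The one point that warrants a little care — and is the main (though modest) obstacle — is the interface between the two topologies. The density in Property~(d) is density in the $\cf(X_0,X_1)$-norm, whereas the extension is required to be continuous \emph{into} $\cf(Y_0,Y_1)$; one must make sure these match up, i.e.\ that the values $T|_{X_0 \cap X_1}(x) = Tx$ are being regarded as elements of $\cf(Y_0,Y_1)$ with its own norm, not merely of $Y_0 \cap Y_1$. Since $X_0 \cap X_1 \subset \cf(X_0,X_1)$ and, for $x$ in this intersection, $Tx \in Y_0 \cap Y_1 \subset \cf(Y_0,Y_1)$, the operator $T|_{X_0 \cap X_1}$ does take values in $\cf(Y_0,Y_1)$, and the standard continuous-extension-from-a-dense-subspace argument applies verbatim in these norms. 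No quantitative estimate beyond the boundedness of $T^\cf$ (already available from the definition) is needed.
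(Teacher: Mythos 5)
Your proof is correct: the existence part follows from the definition of $\cf(T_0,T_1)=T^{\cf}$ as a restriction of $T$ (together with the observation that $T$ maps $X_0\cap X_1$ into $Y_0\cap Y_1$), and uniqueness follows from Property~(d) via the standard density argument, since two continuous maps into the Hausdorff space $\cf(Y_0,Y_1)$ that agree on the dense subspace $X_0\cap X_1$ must coincide. The paper omits the proof as ``easy to prove,'' and your argument is exactly the intended one.
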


Next we consider a functor on consistent semigroups.

\begin{proposition} \label{pcoin1030}
Let $\mathcal F$ be an interpolation functor.
Let $(X_0,X_1)$ be an interpolation couple of Banach spaces.
Let $S^{(0)}$ and $S^{(1)}$ be consistent
semigroups in $X_0$ and $X_1$ respectively. 
Then one has the following.
\begin{tabel}
\item \label{pcoin1030-1}
The family $\Big( \cf( S^{(0)}_t, S^{(1)}_t) \Big)_{t > 0}$
on $\cf(X_0,X_1)$ is a semigroup which is consistent 
with both $S^{(0)}$ and $S^{(1)}$.
\item \label{pcoin1030-1.5}
If both $S^{(0)}$ and $S^{(1)}$ are bounded semigroups, then 
the semigroup $\Big( \cf( S^{(0)}_t, S^{(1)}_t) \Big)_{t > 0}$ is also bounded.
\item \label{pcoin1030-2}
Suppose in addition that $S^{(0)}$ and $S^{(1)}$ are $C_0$-semigroups and that 
the interpolation functor $\mathcal F$ has Property~(d).
Then the semigroup $\Big( \cf( S^{(0)}_t, S^{(1)}_t) \Big)_{t > 0}$ is a $C_0$-semigroup.
\end{tabel}
\end{proposition}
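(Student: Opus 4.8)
The plan is to exploit systematically that, for each $t > 0$, the three operators $S^{(0)}_t$, $S^{(1)}_t$ and $\cf(S^{(0)}_t, S^{(1)}_t)$ are restrictions of one and the same operator on $X_0 + X_1$. Since $S^{(0)}_t$ and $S^{(1)}_t$ are consistent, \lemref{l-induce} supplies a unique $T_t \in L((X_0,X_1),(X_0,X_1))$ with $T_t|_{X_0} = S^{(0)}_t$ and $T_t|_{X_1} = S^{(1)}_t$, and by definition $\cf(S^{(0)}_t, S^{(1)}_t) = T_t^\cf$ is the restriction of $T_t$ to $\cf(X_0,X_1)$. For Statement~\ref{pcoin1030-1} I would first prove the semigroup law: for $x \in \cf(X_0,X_1) \subset X_0 + X_1$ one has $T_t^\cf x = T_t x \in \cf(X_0,X_1)$, so $T_s^\cf T_t^\cf x = T_s T_t x$. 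The operator $T_s T_t$ restricts on $X_0$ to $S^{(0)}_s S^{(0)}_t = S^{(0)}_{s+t}$ and on $X_1$ to $S^{(1)}_s S^{(1)}_t = S^{(1)}_{s+t}$, so by the uniqueness part of \lemref{l-induce} it equals $T_{s+t}$, whence $\cf(S^{(0)}_s, S^{(1)}_s)\,\cf(S^{(0)}_t, S^{(1)}_t) = \cf(S^{(0)}_{s+t}, S^{(1)}_{s+t})$. Consistency with both $S^{(0)}$ and $S^{(1)}$ is then immediate from the remark preceding the proposition, since $T_t^\cf$, $T_t|_{X_0}$ and $T_t|_{X_1}$ are pairwise consistent restrictions of $T_t$.

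Statement~\ref{pcoin1030-1.5} follows directly from \lemref{lcoin303}. If $\|S^{(0)}_t\|_{X_0 \to X_0} \le M_0$ and $\|S^{(1)}_t\|_{X_1 \to X_1} \le M_1$ for all $t > 0$, then $\|T_t\|_{L((X_0,X_1),(X_0,X_1))} = \|S^{(0)}_t\|_{X_0 \to X_0} \vee \|S^{(1)}_t\|_{X_1 \to X_1} \le M_0 \vee M_1$, so with the constant $M$ from \lemref{lcoin303} one obtains $\|\cf(S^{(0)}_t, S^{(1)}_t)\| = \|T_t^\cf\| \le M\,(M_0 \vee M_1)$ uniformly in $t$.

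Statement~\ref{pcoin1030-2} is the heart of the matter, and the difficulty is to upgrade strong continuity at $0$ from the two endpoint spaces to the interpolation space $\cf(X_0,X_1)$. I would first treat $x \in X_0 \cap X_1$: as $S^{(0)}$ and $S^{(1)}$ are $C_0$-semigroups, $T_t x - x = S^{(0)}_t x - x \to 0$ in $X_0$ and $T_t x - x = S^{(1)}_t x - x \to 0$ in $X_1$ as $t \downarrow 0$, so $T_t x - x \to 0$ in $X_0 \cap X_1$ (whose norm is the maximum of the two), and hence, via the continuous embedding $X_0 \cap X_1 \hookrightarrow \cf(X_0,X_1)$ inherent in any interpolation functor, also in $\cf(X_0,X_1)$. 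To pass to arbitrary $x \in \cf(X_0,X_1)$ I would use the uniform bound $C := \sup_{t \in (0,1]} \|T_t^\cf\| < \infty$, which holds because a $C_0$-semigroup is bounded on a neighbourhood of $0$, so that \lemref{lcoin303} (exactly as in~\ref{pcoin1030-1.5}) controls $\|T_t^\cf\|$ there; combined with Property~(d), which renders $X_0 \cap X_1$ dense in $\cf(X_0,X_1)$, a standard three-term approximation then yields $\lim_{t \downarrow 0} \cf(S^{(0)}_t, S^{(1)}_t) x = x$ for every $x$. Strong continuity at each $t_0 > 0$ follows from this together with the semigroup law of~\ref{pcoin1030-1} and the local bound, so the family is a $C_0$-semigroup. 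The only genuine obstacle is the interplay of the three topologies in the first step; once the intermediate embedding $X_0 \cap X_1 \hookrightarrow \cf(X_0,X_1)$ and Property~(d) are in hand, the remainder is routine.
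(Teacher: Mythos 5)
Your proposal is correct and follows essentially the same route as the paper: the semigroup law and consistency via the uniqueness in Lemma~\ref{l-induce}, boundedness via Lemma~\ref{lcoin303}, and strong continuity by first establishing it on $X_0 \cap X_1$ through the continuous embedding $X_0 \cap X_1 \hookrightarrow \cf(X_0,X_1)$ and then extending by density (Property~(d)) and a uniform bound. The only cosmetic difference is that the paper reduces to globally bounded semigroups without loss of generality and invokes part~\ref{pcoin1030-1.5}, whereas you use local boundedness of $\|T^\cf_t\|$ near $t=0$; both are fine.
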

\begin{proof}
`\ref{pcoin1030-1}'.
This is straightforward.

`\ref{pcoin1030-1.5}'.
This follows from Lemmas~\ref{l-induce} and \ref{lcoin303}.

`\ref{pcoin1030-2}'.
Without loss of generality we may assume that both $S^{(0)}$ and
$S^{(1)}$ are bounded semigroups.
For all $t > 0$ write $S^\cf_t = \cf( S^{(0)}_t, S^{(1)}_t)$.
Then also $(S^\cf_t)_{t > 0}$ is a bounded semigroup by Statement~\ref{pcoin1030-1.5}.

Since $\cf(X_0,X_1)$ is an intermediate space for the interpolation couple
$(X_0,X_1)$, there exists a $c > 0$ such that 
$\|x\|_{\cf(X_0,X_1)} \leq c \, \|x\|_{X_0 \cap X_1}$
for all $x \in X_0 \cap X_1$.
Let $x \in X_0 \cap X_1$ and $t > 0$.
Then 
\[
\|S^\cf_t x - x\|_{\cf(X_0,X_1)}
\leq c \, \|S^\cf_t x - x\|_{X_0 \cap X_1}
= c \, (\|S^{(0)}_t x - x\|_{X_0} + \|S^{(1)}_t x - x\|_{X_1})
 .  \]
Hence $\lim_{t \downarrow 0} \|S^\cf_t x - x\|_{\cf(X_0,X_1)} = 0$ and 
$\lim_{t \downarrow 0} S^\cf_t x = x$ in $\cf(X_0,X_1)$.

Finally, $X_0 \cap X_1$ is dense in $\cf(X_0,X_1)$ since the interpolation
functor has Property~(d).
So $\lim_{t \downarrow 0} S^\cf_t x = x$ in $\cf(X_0,X_1)$ for all $x \in \cf(X_0,X_1)$.
\end{proof}

We wish to determine the generator of the semigroup $S^\cf$.
We need a lemma.

\begin{lemma} \label{lcoin103}
Let $\mathcal F$ be an interpolation functor which has Property~(d).
Let $(X_0,X_1)$ be an interpolation couple of Banach spaces.
Further, let $S^{(0)}$ and $S^{(1)}$ be consistent
$C_0$-semigroups in $X_0$ and $X_1$ with generators $-A_0$ and $-A_1$, 
respectively. 
Let $S^\cf = \Big( \cf( S^{(0)}_t, S^{(1)}_t) \Big)_{t > 0}$ 
be the $C_0$-semigroup in $\cf(X_0,X_1)$ as in 
Proposition~\ref{pcoin1030}.
Let $-B$ be the generator of $S^{\mathcal F}$.
Then $D(A_0) \cap D(A_1) \subset D(B)$ and 
$D(A_0) \cap D(A_1)$ is a core for $B$.
\end{lemma}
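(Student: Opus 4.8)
The plan is to exploit that $S^{(0)}_t$, $S^{(1)}_t$ and $S^\cf_t$ are all restrictions of one and the same operator $S_t \in L((X_0,X_1),(X_0,X_1))$ supplied by Lemma~\ref{l-induce}, and to transport the convergence of the difference quotients from $X_0$ and $X_1$ to $\cf(X_0,X_1)$ through the continuous inclusion $X_0 \cap X_1 \hookrightarrow \cf(X_0,X_1)$. First I would reduce to the case that $S^{(0)}$ and $S^{(1)}$ are bounded. Choosing a common $\omega \geq 0$ with $\|S^{(i)}_t\|_{X_i \to X_i} \leq M e^{\omega t}$ for $i \in \{0,1\}$ and replacing $S^{(i)}_t$ by $e^{-\omega t} S^{(i)}_t$ produces bounded consistent $C_0$-semigroups, turns $S^\cf_t$ into $e^{-\omega t} S^\cf_t$ (because restriction to $\cf(X_0,X_1)$ is linear, so the scalar factors through the functor), and replaces the generators by $A_i + \omega I$ and $B + \omega I$ without altering their domains. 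As $D(A_i + \omega I) = D(A_i)$ and $D(B + \omega I) = D(B)$, and since the graph norms of $B$ and $B + \omega I$ are equivalent, both the asserted inclusion and the core property are unchanged. After this reduction Lemma~\ref{lcoin101} and Proposition~\ref{pcoin102} become available.

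For the inclusion, let $x \in D(A_0) \cap D(A_1)$ and let $S_t$ be the operator on $X_0 + X_1$ with $S_t|_{X_i} = S^{(i)}_t$ and $S_t|_{\cf(X_0,X_1)} = S^\cf_t$. Since $x \in X_0 \cap X_1$ and $S_t$ maps $X_0 \cap X_1$ into itself, the difference quotient $z_t := \tfrac{1}{t}(x - S_t x)$ lies in $X_0 \cap X_1$. By the definition of the generators, $z_t \to A_0 x$ in $X_0$ and $z_t \to A_1 x$ in $X_1$ as $t \downarrow 0$, while Proposition~\ref{pcoin102}\ref{pcoin102-1} gives $A_0 x = A_1 x =: y$, an element that therefore lies in $X_0 \cap X_1$. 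Hence $z_t \to y$ in $X_0 \cap X_1$, and because $\cf(X_0,X_1)$ is an intermediate space there is a $c > 0$ with $\|\cdot\|_{\cf(X_0,X_1)} \leq c\,\|\cdot\|_{X_0 \cap X_1}$ on $X_0 \cap X_1$, so also $z_t \to y$ in $\cf(X_0,X_1)$. Since $z_t = \tfrac{1}{t}(x - S^\cf_t x)$ in $\cf(X_0,X_1)$, this shows $x \in D(B)$ with $B x = y = A_0 x = A_1 x$; in particular $B$ is consistent with $A_0$ and $A_1$.

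For the core assertion I would invoke the range form of the core criterion: a subspace $D \subset D(B)$ is a core for $B$ as soon as $(I + B)D$ is dense in $\cf(X_0,X_1)$, since in the bounded case $1 \in \rho(-B)$ and $I + B \colon D(B) \to \cf(X_0,X_1)$ is then a graph-norm isomorphism. Using the previous step, $(I + B)x = (A_0 + I)x$ for every $x \in D(A_0) \cap D(A_1)$, so $(I+B)(D(A_0) \cap D(A_1)) = (A_0 + I)(D(A_0) \cap D(A_1))$. By Proposition~\ref{pcoin102}\ref{pcoin102-2} one has $D(A_0) \cap D(A_1) = (A_0 + I)^{-1}(X_0 \cap X_1)$, so this range equals $X_0 \cap X_1$, which is dense in $\cf(X_0,X_1)$ by Property~(d). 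The criterion then gives that $D(A_0) \cap D(A_1)$ is a core for $B$.

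The individual steps are routine; the point demanding care is the transfer of the convergence $z_t \to y$ from the endpoint spaces to $\cf(X_0,X_1)$, which is precisely where the interpolation structure enters and which hinges on the observation that $y = A_0 x = A_1 x$ truly belongs to $X_0 \cap X_1$ rather than merely to $X_0 + X_1$. A secondary technical nuisance is the justification of the reduction to bounded semigroups, needed so that the results of Section~\ref{Scoin2}, stated there only for bounded semigroups, can be applied.
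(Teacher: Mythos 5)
Your proposal is correct. The core half of your argument is essentially the paper's: the paper also observes that $(B+I)^{-1}$ restricted to $X_0\cap X_1$ coincides with $(A_0+I)^{-1}$ and is a bijection onto $D(A_0)\cap D(A_1)$ (via Proposition~\ref{pcoin102}\ref{pcoin102-2}), and then transports the density of $X_0\cap X_1$ in $\cf(X_0,X_1)$ through the topological isomorphism $(B+I)^{-1}\colon \cf(X_0,X_1)\to D(B)$; your ``range form of the core criterion'' is the same argument read in the other direction. Where you genuinely diverge is the inclusion $D(A_0)\cap D(A_1)\subset D(B)$: the paper gets it for free from the same resolvent bijection (every element of $D(A_0)\cap D(A_1)$ is in the range of $(B+I)^{-1}$, hence in $D(B)$), obtaining consistency of the resolvents from Proposition~\ref{pcoin1030}\ref{pcoin1030-1} and Lemma~\ref{lcoin101}, whereas you prove it directly by pushing the difference quotients $\tfrac1t(x-S_tx)$ from $X_0\cap X_1$ into $\cf(X_0,X_1)$ via the intermediate-space embedding, using Proposition~\ref{pcoin102}\ref{pcoin102-1} to identify the common limit $A_0x=A_1x\in X_0\cap X_1$. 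Your route is slightly longer but yields the identity $Bx=A_0x$ on $D(A_0)\cap D(A_1)$ as an explicit by-product (which you then need for the core step, and which the paper re-derives separately at the start of the proof of Theorem~\ref{tcoin104}); the paper's route is more economical because one application of Proposition~\ref{pcoin102}\ref{pcoin102-2} delivers both assertions at once. Your explicit justification of the reduction to bounded semigroups, which the paper leaves as a one-line ``without loss of generality'', is also sound.
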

\begin{proof}
Without loss of generality we may assume that both $S^{(0)}$ and 
$S^{(1)}$ are bounded semigroups.
The resolvent
\[
(B+I)^{-1} \colon \mathcal F(X_0,X_1) \to D(B)
\]
is a topological isomorphism.
Also the resolvent operators $(B + I)^{-1}$ and $(A_0 + I)^{-1}$
are consistent by Proposition~\ref{pcoin1030}\ref{pcoin1030-1} and Lemma~\ref{lcoin101}.
By Lemma~\ref{pcoin102}\ref{pcoin102-2} the restriction
\[
(B+I)^{-1}|_{X_0 \cap X_1} 
= (A_0 + I)^{-1}|_{X_0 \cap X_1} \colon X_0 \cap X_1 \to D(A_0) \cap D(A_1)
\]
is a bijection.
Because $X_0 \cap X_1 \subset \mathcal F(X_0,X_1)$, this 
implies immediately the assertion  $D(A_0) \cap D(A_1) \subset D(B)$.
Since $\mathcal F$ has Property~(d),
the space $X_0 \cap X_1$ is dense in $\mathcal F(X_0,X_1)$.
Hence $ D(A_0) \cap D(A_1)$ is dense in $ D(B)$.
\end{proof}

We provide the domain of a generator with the graph norm.
Note that with the notation of the previous lemma, 
$(D(A_0),D(A_1))$ is an interpolation couple and 
$A_0 \in \cl(D(A_0),X_0)$ and similarly $A_1 \in \cl(D(A_1),X_1)$.
Now we are able to prove the main theorem of this paper.

\begin{theorem} \label{tcoin104}
Let $\mathcal F$ be an interpolation functor which has Property~(d).
Let $(X_0,X_1)$ be an interpolation couple of Banach spaces.
Further, let $S^{(0)}$ and $S^{(1)}$ be consistent
$C_0$-semigroups in $X_0$ and $X_1$ with generators $-A_0$ and $-A_1$, 
respectively. 
Then $- \cf(A_0,A_1)$ is the generator of the semigroup
$\Big( \cf( S^{(0)}_t, S^{(1)}_t) \Big)_{t > 0}$.

In particular, 
\[
D(\cf(A_0,A_1)) = \mathcal F(D(A_0),D(A_1)).
\]
\end{theorem}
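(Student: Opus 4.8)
My plan is to identify the generator by computing its resolvent and matching it, through the functor, with the interpolated resolvents of $A_0$ and $A_1$. As in the proof of Lemma~\ref{lcoin103} I first reduce to the case where $S^{(0)}$ and $S^{(1)}$ are bounded, so that $1 \in \rho(-A_0) \cap \rho(-A_1)$ and the operators $R_0 = (A_0+I)^{-1} \in \cl(X_0)$ and $R_1 = (A_1+I)^{-1} \in \cl(X_1)$ are bounded. They are consistent by Lemma~\ref{lcoin101}, so the induced operator $\cf(R_0,R_1) \in \cl(\cf(X_0,X_1))$ is defined. Writing $-B$ for the generator of $S^\cf = \big(\cf(S^{(0)}_t,S^{(1)}_t)\big)_{t>0}$, the first step is to prove
\[
 (B+I)^{-1} = \cf(R_0,R_1)
\]
as bounded operators on $\cf(X_0,X_1)$. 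Both sides are bounded, so by Property~(d) it suffices to check equality on the dense subspace $X_0 \cap X_1$. There $\cf(R_0,R_1)$ acts as $R_0$ (all the relevant operators are restrictions of one map on $X_0 + X_1$, as noted after Lemma~\ref{lcoin303}), and $(B+I)^{-1}$ acts as $R_0$ as well: the pair $(\cf(X_0,X_1),X_0)$ is an interpolation couple carrying the consistent bounded $C_0$-semigroups $S^\cf$ and $S^{(0)}$ by Proposition~\ref{pcoin1030}\ref{pcoin1030-1}, so Lemma~\ref{lcoin101} forces $(B+I)^{-1}$ and $R_0$ to be consistent. (Alternatively one may use $(B+I)^{-1}x = \int_0^\infty e^{-t}\,S^\cf_t x\,dt$ together with $S^\cf_t x = S^{(0)}_t x$ for $x \in X_0 \cap X_1$.)

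The decisive step is to read off the range and inverse of $\cf(R_0,R_1)$. I view $R_0 \colon X_0 \to D(A_0)$ and $R_1 \colon X_1 \to D(A_1)$ as bounded operators into the domains equipped with their graph norms; these are topological isomorphisms of couples, with the (consistent) inverse pair $(A_0+I,A_1+I) \colon (D(A_0),D(A_1)) \to (X_0,X_1)$. Since $\cf(\,\cdot\,,\cdot\,)$ is merely the restriction of the induced map on the sum space, it preserves identities and compositions, hence isomorphisms; therefore $\cf(R_0,R_1) \colon \cf(X_0,X_1) \to \cf(D(A_0),D(A_1))$ is a topological isomorphism with inverse $\cf(A_0+I,A_1+I)$. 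Composing it with the inclusion $\cf(D(A_0),D(A_1)) \hookrightarrow \cf(X_0,X_1)$ recovers the operator of the first step, so the range of $(B+I)^{-1}$ is exactly the image of this inclusion. Consequently
\[
 D(B) = \cf(D(A_0),D(A_1)),
\]
with equivalent norms, which is precisely the asserted domain identity. To make this rigorous one checks that the induced inclusion $\cf(D(A_0),D(A_1)) \hookrightarrow \cf(X_0,X_1)$ is injective, which follows from injectivity of $\cf(X_0,X_1) \hookrightarrow X_0+X_1$ together with $D(A_0)+D(A_1) \subset X_0+X_1$.

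It remains to invert the resolvent identity. On $D(B) = \cf(D(A_0),D(A_1))$ we obtain $B+I = \cf(A_0+I,A_1+I)$, and since $\cf(\,\cdot\,,\cdot\,)$ is additive on consistent pairs of operators (again because it is a restriction of the additive-in-the-operators induced map), this equals $\cf(A_0,A_1) + \iota$, where $\iota$ is the inclusion $\cf(D(A_0),D(A_1)) \hookrightarrow \cf(X_0,X_1)$, that is, the identity $I$ on $D(B)$. Hence $Bx = \cf(A_0,A_1)\,x$ for all $x \in D(B)$, so $-\cf(A_0,A_1)$ generates $S^\cf$, as claimed.

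The step I expect to be most delicate is the bookkeeping around the embedding $\cf(D(A_0),D(A_1)) \hookrightarrow \cf(X_0,X_1)$: one must verify both that this inclusion is injective, so that $\cf(D(A_0),D(A_1))$ is genuinely a subspace of $\cf(X_0,X_1)$, and that the range of $\cf(R_0,R_1)$ computed inside $\cf(X_0,X_1)$ is exactly this subspace and coincides with $D(B)$. Once functoriality (preservation of isomorphisms) and this identification are in place, the remaining manipulations are formal. I note that Lemma~\ref{lcoin103} already furnishes $D(A_0) \cap D(A_1) \subset D(B)$ together with its density, so a core-based argument comparing $B$ and $\cf(A_0,A_1)$ on $D(A_0) \cap D(A_1)$ gives an alternative route; I prefer the resolvent approach because it delivers the domain identity directly.
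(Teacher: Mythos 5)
Your proof is correct, but it takes a genuinely different route from the paper's. The paper first shows (Lemma~\ref{lcoin103}) that $D(A_0)\cap D(A_1)\subset D(B)$ and is a core for $B$, then proves $B\supset\cf(A_0,A_1)$ by approximating elements of $\cf(D(A_0),D(A_1))$ from $D(A_0)\cap D(A_1)$ and invoking closedness of $B$, and finally gets $D(B)\subset\cf(D(A_0),D(A_1))$ from the a priori estimate $\|x\|_{\cf(D(A_0),D(A_1))}\leq c'\,\|(B+I)x\|_{\cf(X_0,X_1)}$ on the core together with completeness. You instead establish the resolvent identity $(B+I)^{-1}=\cf\bigl((A_0+I)^{-1},(A_1+I)^{-1}\bigr)$ first --- this is essentially Proposition~\ref{pcoin105}, which the paper deduces \emph{after} and \emph{from} the theorem; your derivation via agreement on the dense subspace $X_0\cap X_1$ avoids any circularity --- and then exploit that the functor, being restriction of the induced map on the sum space, preserves identities and composition, hence carries the mutually inverse consistent pairs $\bigl((A_0+I)^{-1},(A_1+I)^{-1}\bigr)$ and $\bigl(A_0+I,A_1+I\bigr)$ to mutually inverse bounded operators between $\cf(X_0,X_1)$ and $\cf(D(A_0),D(A_1))$. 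Reading off $D(B)$ as the range of the resolvent then yields the domain identity with equivalent norms in one stroke, and inverting recovers $B=\cf(A_0,A_1)$. What your approach buys is a cleaner, more categorical argument that makes the resolvent statement primary and delivers the norm equivalence directly; what the paper's approach buys is the core property of $D(A_0)\cap D(A_1)$ as a by-product of independent interest, and it sidesteps the bookkeeping around the embedding $\cf(D(A_0),D(A_1))\hookrightarrow\cf(X_0,X_1)$, which you correctly identify as the delicate point and do handle (injectivity holds because the induced inclusion is a restriction of the identity on $D(A_0)+D(A_1)\subset X_0+X_1$). Both arguments ultimately rest on the same ingredients: consistency of the resolvents (Lemma~\ref{lcoin101}), consistency of the generators (Proposition~\ref{pcoin102}), and Property~(d).
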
 
\begin{proof}
Without loss of generality we may assume that $S^{(0)}$ and $S^{(1)}$
are bounded semigroups.
Write $S^\cf_t = \cf( S^{(0)}_t, S^{(1)}_t)$ for all $t > 0$ and let 
$-B$ be the generator of the $C_0$-semigroup $S^\cf$.
We know that $D(A_0) \cap D(A_1) \subset D(B)$ by Lemma~\ref{lcoin103}.
Also $B x = A_0 x = A^\cf x$ for all $x \in D(A_0) \cap D(A_1)$
by Proposition~\ref{pcoin102}\ref{pcoin102-1}, where we set 
$A^\cf = \cf(A_0,A_1)$.
The operator $A^\cf$ is bounded from 
$\cf(D(A_0),D(A_1))$ into $\cf(X_0,X_1)$.
Hence there exists a $c > 0$ such that 
\[
\|A^\cf x\|_{\cf(X_0,X_1)} 
\leq c \, \|x\|_{\cf(D(A_0),D(A_1))}
\]
for all $x \in \cf(D(A_0),D(A_1))$.
If $x \in D(A_0) \cap D(A_1)$, then $B x = A^\cf x$ and
\[
\|B x\|_{\cf(X_0,X_1)} \leq c \, \|x\|_{\cf(D(A_0),D(A_1))}
 .  \]
Let $x \in \cf(D(A_0),D(A_1))$.
Since $D(A_0) \cap D(A_1)$ is dense in $\cf(D(A_0),D(A_1))$ by Property~(d),
there exists a sequence $(x_n)_{n \in \Ni}$ in $D(A_0) \cap D(A_1)$
such that $\lim x_n = x$ in $\cf(D(A_0),D(A_1))$.
Then $(B x_n)_{n \in \Ni}$ is a Cauchy sequence in $\cf(X_0,X_1)$
and $\lim x_n = x$ in $\cf(X_0,X_1)$.
Since $B$ is a closed operator, it follows that $x \in D(B)$ and 
$B x = \lim B x_n = \lim A^\cf x_n = A^\cf x$ in $\cf(X_0,X_1)$.
Hence $B$ is an extension of $A^\cf$.

It remains to show that $D(B) \subset \cf(D(A_0),D(A_1))$.
The operator $(A_0 + I)^{-1}$ is bounded from $X_0$ into $D(A_0)$
and the operator $(A_1 + I)^{-1}$ is bounded from $X_1$ into $D(A_1)$.
Moreover, the operators $(A_0 + I)^{-1}$ and $(A_1 + I)^{-1}$
are consistent by Lemma~\ref{lcoin101}.
So by interpolation one obtains a bounded operator, denoted by $C$, 
from $\cf(X_0,X_1)$ into 
$\cf(D(A_0),D(A_1))$.
Let $c' > 0$ be such that 
\[
\|C x\|_{\cf(D(A_0),D(A_1))} 
\leq c' \, \|x\|_{\cf(X_0,X_1)}
\]
for all $x \in \cf(X_0,X_1)$.
If $x \in X_0 \cap X_1$, then 
$C x = (A_0 + I)^{-1} x$.
Hence 
\[
\|(A_0 + I)^{-1} x\|_{\cf(D(A_0),D(A_1))}
\leq c' \, \|x\|_{\cf(X_0,X_1)}
\]
for all $x \in X_0 \cap X_1$.
Using Proposition~\ref{pcoin102}\ref{pcoin102-2} it follows that 
\[
\|x\|_{\cf(D(A_0),D(A_1))} 
\leq c' \, \|(A_0 + I) x\|_{\cf(X_0,X_1)}
= c' \, \|(B + I) x\|_{\cf(X_0,X_1)}
\]
for all $x \in D(A_0) \cap D(A_1)$.
But $D(A_0) \cap D(A_1)$ is dense in $D(B)$ by Lemma~\ref{lcoin103}.
Since $\cf(D(A_0),D(A_1))$ is complete, it follows that 
$D(B) \subset \cf(D(A_0),D(A_1))$.
\end{proof}
A similar statement is valid for the resolvents.

\begin{proposition} \label{pcoin105}
Let $\mathcal F$ be an interpolation functor which has Property~(d).
Let $(X_0,X_1)$ be an interpolation couple of Banach spaces.
Further, let $S^{(0)}$ and $S^{(1)}$ be consistent bounded
$C_0$-semigroups in $X_0$ and $X_1$ with generators $-A_0$ and $-A_1$, 
respectively. 
Then 
\[
\cf\Big( (A_0 + I)^{-1}, (A_1 + I)^{-1} \Big)
= \Big( \cf(A_0,A_1) + I \Big)^{-1}
 .  \]
\end{proposition}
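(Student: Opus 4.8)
The plan is to reduce everything to a density argument using Theorem~\ref{tcoin104}. That theorem identifies $-\cf(A_0,A_1)$ as the generator of $S^\cf = \big(\cf(S^{(0)}_t,S^{(1)}_t)\big)_{t>0}$; writing $-B$ for this generator we have $B = \cf(A_0,A_1)$, so the asserted identity becomes
\[
\cf\big((A_0+I)^{-1},(A_1+I)^{-1}\big) = (B+I)^{-1}.
\]
Both sides are bounded operators on $\cf(X_0,X_1)$: the left-hand side is defined and bounded because $(A_0+I)^{-1}$ and $(A_1+I)^{-1}$ are consistent by Lemma~\ref{lcoin101}, and the right-hand side exists because $-B$ generates the bounded $C_0$-semigroup $S^\cf$ (Proposition~\ref{pcoin1030}), whence $(B+I)^{-1}$ is bounded. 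Since $X_0 \cap X_1$ is dense in $\cf(X_0,X_1)$ by Property~(d), it suffices to check that the two operators agree on $X_0 \cap X_1$.

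Next I would evaluate each side on $X_0 \cap X_1$. For the left-hand side, Lemma~\ref{lcoin320} says that $\cf\big((A_0+I)^{-1},(A_1+I)^{-1}\big)$ is the unique continuous extension to $\cf(X_0,X_1)$ of the common underlying operator restricted to $X_0 \cap X_1$; in particular it coincides with $(A_0+I)^{-1}$ on $X_0 \cap X_1$. For the right-hand side, the semigroups $S^\cf$ and $S^{(0)}$ are consistent by Proposition~\ref{pcoin1030}\ref{pcoin1030-1}, so by Lemma~\ref{lcoin101} the resolvents $(B+I)^{-1}$ and $(A_0+I)^{-1}$ are consistent; hence they agree on $\cf(X_0,X_1) \cap X_0 \supset X_0 \cap X_1$, and in particular $(B+I)^{-1}$ also equals $(A_0+I)^{-1}$ there.

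Combining the two evaluations, the bounded operators $\cf\big((A_0+I)^{-1},(A_1+I)^{-1}\big)$ and $(B+I)^{-1}$ agree on the dense subspace $X_0 \cap X_1$, and therefore coincide on all of $\cf(X_0,X_1)$. I expect no genuine obstacle here; the only point requiring care is the consistency bookkeeping, but this is automatic once one observes that $(A_0+I)^{-1}$, $(A_1+I)^{-1}$, $\cf\big((A_0+I)^{-1},(A_1+I)^{-1}\big)$ and $(B+I)^{-1}$ all arise as restrictions of operators defined on $X_0+X_1$ and are pairwise consistent, so that verifying equality on $X_0 \cap X_1$ genuinely pins down the global identity.
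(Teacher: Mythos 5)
Your proposal is correct and follows essentially the same route as the paper: identify $-\cf(A_0,A_1)$ as the generator of $S^\cf$ via Theorem~\ref{tcoin104}, verify that both resolvents agree with $(A_0+I)^{-1}$ on $X_0\cap X_1$, and conclude by density (Property~(d) / Lemma~\ref{lcoin320}). The only cosmetic difference is that you invoke Lemma~\ref{lcoin101} for the consistency of $(B+I)^{-1}$ with $(A_0+I)^{-1}$, whereas the paper re-runs the Laplace-transform computation against functionals $F\in(X_0+X_1)'$ directly.
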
 
\begin{proof}
Write $S^\cf_t = \cf( S^{(0)}_t, S^{(1)}_t)$ for all $t > 0$.
Let $x \in X_0 \cap X_1$.
If $F \in (X_0 + X_1)'$, then 
\begin{eqnarray*}
F \Big( (\cf(A_0,A_1) + I)^{-1} x \Big)
& = & \int_0^\infty e^{-t} \, F \Big( S^\cf_t x \Big) \, dt  
= \int_0^\infty e^{-t} \, F \Big( S^{(0)}_t x \Big) \, dt  \\
& = & F \Big( (A_0 + I)^{-1} x \Big)
= F \Big( \cf\Big( (A_0 + I)^{-1}, (A_1 + I)^{-1} \Big) x \Big)  
 .  
\end{eqnarray*}
So 
\[
(\cf(A_0,A_1) + I)^{-1} x = \cf\Big( (A_0 + I)^{-1}, (A_1 + I)^{-1} \Big) x
 .  \]
Moreover, the operator $(\cf(A_0,A_1) + I)^{-1}$ is bounded from $\cf(X_0,X_1)$ into itself.
Hence $\cf\Big( (A_0 + I)^{-1}, (A_1 + I)^{-1} \Big) = (\cf(A_0,A_1) + I)^{-1}$ by Lemma~\ref{lcoin320}.
\end{proof}

\section{Example, $L^p$-spaces} \label{Scoin4}

One of the commonly used theorems states that semigroups on $L^p$-spaces, which are 
induced by forms on $L^2$, extrapolate \emph{consistently} to the whole $L^p$-scale,
provided one knows Gaussian estimates for the $L^2$-semigroup.
We next describe this situation.

Let $\Omega \subset \R^d$ be a bounded domain and $\mathcal D \subset \partial \Omega$ be closed.
We 
define
\[
C^\infty_{\mathcal D}(\Omega)
=\{\psi|_\Omega: \psi \in C^\infty(\R^d) \mbox{ and } \supp \psi \cap \mathcal D =\emptyset \}.
\]
For all $p \in [1,\infty)$ let 
$W^{1,p}_{\mathcal D}(\Omega)$ be the closure of $C^\infty_{\mathcal D}(\Omega)$ in 
$W^{1,p}(\Omega)$.
If $q \in (1,\infty]$, then we denote by $W^{-1,q}_{\mathcal D}(\Omega)$
the (anti-)dual of the space $W^{1,q'}_{\mathcal D}(\Omega)$, where $q'$ is the 
dual exponent of $q$.
Let $\mu$ be a real, bounded, measurable, elliptic function on $\Omega$ which takes 
its values in the set of real $d \times d$-matrices.
Define the sesquilinear form 
$\mathfrak t \colon W^{1,2}_{\mathcal D}(\Omega) \times W^{1,2}_{\mathcal D}(\Omega) \to \Ci$ by
\[
\mathfrak t[u,v] = \int _\Omega \mu \nabla u \cdot \overline{\nabla v}
 .  \]
Let $A$ be the operator associated with $\mathfrak t$ in $L^2(\Omega)$ and 
let $\ca \colon W^{1,2}_{\mathcal D}(\Omega) \to W^{-1,2}_{\mathcal D}(\Omega)$ be 
defined by $\langle \ca u,v \rangle = \mathfrak t[u,v]$ for all $u,v \in W^{1,2}_{\mathcal D}(\Omega)$.
Then $A$ and $\ca$ generate analytic semigroups
$S^{(2)}$ and $\widetilde S^{(2)}$ on $L^2(\Omega)$ and 
$W^{-1,2}_{\mathcal D}(\Omega)$, respectively.

\begin{theorem} \label{t-consistlp}
Adopt the above notation and assumptions.
\begin{tabel}
\item \label{t-consistlp-1}
The  semigroups $S^{(2)}$ and $\widetilde S^{(2)}$  are consistent.
\item \label{t-consistlp-2}
If the boundary around any point $x \in \overline {\partial \Omega \setminus \mathcal D}$ 
admits a 
bi-Lipschitzian boundary chart, then the semigroup $S^{(2)}$ on $L^2(\Omega)$ has a kernel 
with Gaussian upper estimates. 

Moreover, the semigroup $S^{(2)}$ extends consistently to a $C_0$-semigroup $S^{(p)}$ on 
$L^p(\Omega)$ for all $p \in [1,\infty)$. 
\end{tabel}
\end{theorem}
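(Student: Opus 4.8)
The proof splits according to the two statements. For \ref{t-consistlp-1} the plan is to reduce consistency of the two semigroups to consistency of their resolvents and then to read off the latter from the common form $\ta$. Observe first that $L^2(\Omega)$ embeds continuously and densely into $W^{-1,2}_{\mathcal D}(\Omega)$ via $f \mapsto (v \mapsto \int_\Omega f\,\overline v)$, so that $(L^2(\Omega), W^{-1,2}_{\mathcal D}(\Omega))$ is an interpolation couple with intersection $L^2(\Omega)$. By \lemref{lcoin101} it then suffices to check that $(A + \lambda I)^{-1}$ and $(\ca + \lambda I)^{-1}$ agree on $L^2(\Omega)$ for each $\lambda > 0$. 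Fix such a $\lambda$ and an $f \in L^2(\Omega)$, and set $u = (A+\lambda I)^{-1} f \in D(A) \subset W^{1,2}_{\mathcal D}(\Omega)$. By the definition of the operator $A$ associated with $\ta$ one has $\ta[u,v] + \lambda\,(u,v)_{L^2} = (f,v)_{L^2}$ for all $v \in W^{1,2}_{\mathcal D}(\Omega)$, which is exactly the statement $(\ca + \lambda I) u = f$ in $W^{-1,2}_{\mathcal D}(\Omega)$; hence $u = (\ca + \lambda I)^{-1} f$ and the resolvents are consistent. The only auxiliary point is that $\ca + \lambda I \colon W^{1,2}_{\mathcal D}(\Omega) \to W^{-1,2}_{\mathcal D}(\Omega)$ is boundedly invertible for $\lambda > 0$, which follows from the coercivity of $\ta$ by Lax--Milgram.

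For \ref{t-consistlp-2} I would first establish the Gaussian upper bound for the kernel of $S^{(2)}$ and then extrapolate. Since $\mu$ is real, the form $\ta$ satisfies the Beurling--Deny--Ouhabaz criteria: the mixed-boundary space $W^{1,2}_{\mathcal D}(\Omega)$ is stable under the normal contractions $u \mapsto (0 \vee u) \wedge 1$, so $S^{(2)}$ is positive and $L^\infty$-contractive, hence submarkovian and simultaneously contractive on every $L^p(\Omega)$. The geometric hypothesis enters next: the bi-Lipschitzian boundary charts around $\overline{\partial \Omega \setminus \mathcal D}$ furnish a local $W^{1,2}$-extension near the Neumann part of the boundary, while on $\mathcal D$ the functions of $W^{1,2}_{\mathcal D}(\Omega)$ vanish and may be extended by zero; together these yield the Sobolev inequality $\norm{u}_{L^{2d/(d-2)}} \le C\,\norm{u}_{W^{1,2}_{\mathcal D}}$ (with the usual modification when $d \le 2$). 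Combined with submarkovianity, Nash's inequality then gives the on-diagonal bound $\norm{S^{(2)}_t}_{L^1 \to L^\infty} \le C\,t^{-d/2}$ for $0 < t \le 1$, which Davies' exponential-perturbation method upgrades to the off-diagonal estimate $\abs{K_t(x,y)} \le C\,t^{-d/2}\exp(-\abs{x-y}^2/(c\,t))$.

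Granted the Gaussian estimate, the $L^p$-extrapolation is routine and I would argue as follows. The bound makes $\sup_x \int_\Omega \abs{K_t(x,y)}\,dy$ and $\sup_y \int_\Omega \abs{K_t(x,y)}\,dx$ uniformly finite for $t$ in bounded intervals, so by the Schur test each $S^{(2)}_t$ extends to a bounded operator on every $L^p(\Omega)$; the Chapman--Kolmogorov identity for the kernel, valid because the operators already agree on the dense subspace $L^2(\Omega) \cap L^p(\Omega)$, shows that these extensions form a semigroup $S^{(p)}$, and strong continuity on $L^p(\Omega)$ for $p < \infty$ follows from the uniform boundedness together with convergence on a dense set. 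Finally, since all the $S^{(p)}_t$ are given by the one kernel $K_t$, they coincide on $L^p(\Omega) \cap L^r(\Omega)$ and are therefore consistent in the sense of \defnref{dcoin210}, by \lemref{lcoic211}.

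The main obstacle is the Gaussian estimate in \ref{t-consistlp-2}, and within it the Sobolev inequality for $W^{1,2}_{\mathcal D}(\Omega)$ under only a bi-Lipschitz chart assumption near the Neumann part of the boundary: the submarkovian property, Nash's inequality and Davies' perturbation are standard once that embedding is available, but securing the embedding for a genuinely nonsmooth domain with mixed boundary conditions is the delicate step and is where the precise geometry of $\Omega$ and $\mathcal D$ must be used.
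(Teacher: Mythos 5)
Your proposal is essentially a self-contained reconstruction of results that the paper handles purely by citation: part~\ref{t-consistlp-1} is referred to Ouhabaz (Subsection~1.4.2 of \cite{Ouh5}), the Gaussian bound in part~\ref{t-consistlp-2} to \cite{ERe1} Theorem~3.1, and the $L^p$-extrapolation to the second proof on page~1160 of \cite{Are2}. Your argument for \ref{t-consistlp-1} --- reduce to consistency of resolvents via Lemma~\ref{lcoin101} and then identify $(A+\lambda I)^{-1}f$ with $(\ca+\lambda I)^{-1}f$ for $f\in L^2(\Omega)$ through the common weak formulation $\ta[u,v]+\lambda(u,v)_{L^2}=(f,v)_{L^2}$ --- is correct and complete, and it has the merit of staying inside the paper's own framework rather than appealing to an external source; the only cosmetic point is that Lemma~\ref{lcoin101} is stated for \emph{bounded} semigroups, so one should either note that $S^{(2)}$ is contractive and $\widetilde S^{(2)}$ is bounded holomorphic, or rescale by $e^{-\omega t}$ first (the paper itself remarks that boundedness is assumed only for convenience). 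For \ref{t-consistlp-2} your route (Beurling--Deny/Ouhabaz criteria for submarkovianity, Sobolev embedding, Nash inequality, Davies perturbation, then Schur test and density for the $L^p$-extrapolation and consistency via Lemma~\ref{lcoic211}) is the standard and correct strategy, and the extrapolation half is genuinely routine as you say. The one place where your text is an assertion rather than a proof is exactly the step you flag: the Sobolev inequality for $W^{1,2}_{\mathcal D}(\Omega)$, equivalently the existence of a bounded extension operator built from bi-Lipschitz reflection near $\overline{\partial\Omega\setminus\mathcal D}$ and extension by zero near the interior of $\mathcal D$. That is precisely the content of the cited \cite{ERe1} Theorem~3.1, so at that point your proof and the paper's coincide in substance; if you want a fully self-contained argument you would need to carry out the partition-of-unity/extension construction in detail, including the compactness argument that covers $\partial\Omega$ by bi-Lipschitz chart neighbourhoods together with neighbourhoods on which functions of $C^\infty_{\mathcal D}(\Omega)$ vanish.
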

\begin{proof}
`\ref{t-consistlp-1}'.
See \cite{Ouh5} Subsection~1.4.2.

`\ref{t-consistlp-2}'. 
The first assertion is proved in \cite{ERe1} Theorem~3.1.
The second one follows from the
first by \cite{Are2} second proof on page~1160.
\end{proof}

It is desirable in various contexts to know the consistency of semigroups on spaces
like $L^p(\Omega)$ and $W^{-1,q}_\mathcal D(\Omega)$ -- as outlined in the introduction.
Before we prove such a result we establish the following lemma.

\begin{lemma} \label{l-embeddens}
Let $p \in [1,\infty)$ and $q \in (1,\infty)$.
Then $C_c^\infty(\Omega)$ is dense in $W^{-1,q}_{\mathcal D}(\Omega) \cap L^p(\Omega)$.
\end{lemma}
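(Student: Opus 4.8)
The plan is to prove the density by duality, so I would first fix the ambient structure in which the intersection lives. For a bounded domain both $L^p(\Omega)$ and $W^{-1,q}_{\mathcal D}(\Omega)$ embed continuously into $W^{-1,m}_{\mathcal D}(\Omega)$ with $m=\min\{p,q\}$: for the first because $L^p(\Omega)\subset L^m(\Omega)$ and $L^m(\Omega)\hookrightarrow W^{-1,m}_{\mathcal D}(\Omega)$ through $f\mapsto\bigl(v\mapsto\int_\Omega f\,\bar v\bigr)$, and for the second by dualising the dense continuous inclusion $W^{1,m'}_{\mathcal D}(\Omega)\hookrightarrow W^{1,q'}_{\mathcal D}(\Omega)$. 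Thus $(L^p,W^{-1,q}_{\mathcal D})$ is an interpolation couple and $X:=W^{-1,q}_{\mathcal D}(\Omega)\cap L^p(\Omega)$ is a Banach space containing $C_c^\infty(\Omega)$. I would record two compatibility facts coming from these identifications: for $u\in X$ and any $v\in C^\infty_{\mathcal D}(\Omega)\subset W^{1,m'}_{\mathcal D}(\Omega)$ the $W^{-1,q}_{\mathcal D}$--$W^{1,q'}_{\mathcal D}$ pairing equals $\int_\Omega u\,\bar v$, and for $\phi\in C_c^\infty(\Omega)$, which genuinely lies in $L^q(\Omega)$, the same formula $\langle\phi,v\rangle=\int_\Omega\phi\,\bar v$ holds against every $v\in W^{1,q'}_{\mathcal D}(\Omega)$.

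Assuming $C_c^\infty(\Omega)$ is not dense in $X$, I would pick by Hahn--Banach a nonzero $\Phi\in X'$ vanishing on $C_c^\infty(\Omega)$. Since $C_c^\infty(\Omega)$ is dense in $L^p(\Omega)$ and, by reflexivity of $W^{-1,q}_{\mathcal D}(\Omega)$, also in $W^{-1,q}_{\mathcal D}(\Omega)$ (the annihilator of $C_c^\infty(\Omega)$ in its dual $W^{1,q'}_{\mathcal D}(\Omega)$ being $\{0\}$), the intersection is dense in both endpoint spaces, so $X'=L^{p'}(\Omega)+W^{1,q'}_{\mathcal D}(\Omega)$ by \cite{BL} Theorem~2.7.1. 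Writing $\Phi=g+h$ with $g\in L^{p'}$ and $h\in W^{1,q'}_{\mathcal D}$, we have $\Phi(u)=\int_\Omega u\,\bar g+\langle u,h\rangle$ for $u\in X$. Testing on $\phi\in C_c^\infty(\Omega)$ and using the second compatibility fact gives $\int_\Omega\phi\,(\bar g+\bar h)=0$ for all such $\phi$; hence $g+h=0$ as distributions on $\Omega$, so $g=-h$ almost everywhere, and in particular $h=-g\in L^{p'}(\Omega)$, that is $h\in W^{1,q'}_{\mathcal D}(\Omega)\cap L^{p'}(\Omega)$.

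It then remains to show $\Phi=0$, which, because $g=-h$, reduces to the single identity $\langle u,h\rangle=\int_\Omega u\,\bar h$ for every $u\in X$. This would follow immediately if $h$ admitted approximants $w_k\in C^\infty_{\mathcal D}(\Omega)$ converging to $h$ simultaneously in $W^{1,q'}(\Omega)$ and in $L^{p'}(\Omega)$: the $W^{1,q'}$-convergence forces $\langle u,w_k\rangle\to\langle u,h\rangle$, the $L^{p'}$-convergence forces $\int_\Omega u\,\overline{w_k}\to\int_\Omega u\,\bar h$ since $u\in L^p$, and $\langle u,w_k\rangle=\int_\Omega u\,\overline{w_k}$ by the first compatibility fact. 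So the entire proof reduces to the purely positive-order statement that $C^\infty_{\mathcal D}(\Omega)$ is dense in $W^{1,q'}_{\mathcal D}(\Omega)\cap L^{p'}(\Omega)$.

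The hard part will be exactly this simultaneous approximation in two norms of different differentiability order, and I would carry it out by truncation so as to sidestep any Sobolev embedding between $W^{1,q'}$ and $L^{p'}$ (which need not hold for the given $p,q,d$). Fix a bounded smooth truncation $\Psi_N$ with $\Psi_N(z)=z$ for $\abs{z}\le N$, $\Psi_N(0)=0$ and $\abs{\Psi_N'}\le1$. Composition with $\Psi_N$ preserves $W^{1,q'}_{\mathcal D}$ (approximate by $C^\infty_{\mathcal D}$ functions and use that $C^\infty_{\mathcal D}$ is stable under such compositions, since the support is unchanged), and by dominated convergence $\Psi_N\circ h\to h$ both in $W^{1,q'}$ and in $L^{p'}$; this reduces the claim to bounded $h$. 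For $h\in L^\infty$ I would take any $w_k\in C^\infty_{\mathcal D}(\Omega)$ with $w_k\to h$ in $W^{1,q'}$, replace them by $\Psi_M\circ w_k$ with $M=\norm{h}_\infty$ to obtain a uniformly bounded sequence still converging to $h$ in $W^{1,q'}$, and then conclude from the uniform bound and the resulting almost-everywhere convergence that $\Psi_M\circ w_k\to h$ in every $L^r(\Omega)$ with $r<\infty$, in particular in $L^{p'}$. This produces the desired approximants, yields $\Phi=0$, and contradicts $\Phi\neq0$, proving the density.
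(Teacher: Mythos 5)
Your proof is correct and follows essentially the same route as the paper: establish that $C_c^\infty(\Omega)\subset W^{-1,q}_{\mathcal D}(\Omega)\cap L^p(\Omega)$ makes the intersection dense in both endpoint spaces, identify the dual of the intersection with $W^{1,q'}_{\mathcal D}(\Omega)+L^{p'}(\Omega)$ via \cite{BL} Theorem~2.7.1, observe that a functional vanishing on $C_c^\infty(\Omega)$ is represented by an $L^1(\Omega)$ function that must vanish, and conclude with Hahn--Banach. The one place you go beyond the paper's (very terse) argument is that you make explicit, and then prove by truncation, the step ``$g+h=0$ almost everywhere implies the functional vanishes on the whole intersection,'' which amounts to the compatibility $\langle u,h\rangle_{W^{-1,q}_{\mathcal D}\times W^{1,q'}_{\mathcal D}}=\int_\Omega u\,\bar h$ for $u$ in the intersection and $h\in W^{1,q'}_{\mathcal D}(\Omega)\cap L^{p'}(\Omega)$; this extra care is sound, correctly avoids relying on a Sobolev embedding $W^{1,q'}\hookrightarrow L^{p'}$ that need not hold, and addresses a point the paper's proof passes over silently.
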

\begin{proof}
First of all, $W^{-1,q}_{\mathcal D}(\Omega) \cap L^p(\Omega)$ is dense in both  
$W^{-1,q}_{\mathcal D}(\Omega)$ and $L^p(\Omega)$, since 
$C_c^\infty(\Omega) \subset W^{-1,q}_{\mathcal D}(\Omega) \cap L^p(\Omega)$.
Therefore
\[
\bigl( W^{-1,q}_{\mathcal D}(\Omega)\cap L^p(\Omega)\bigl) ' 
= \bigl( W^{-1,q}_{\mathcal D}(\Omega)\bigl)  ' 
   +\bigl( L^p(\Omega)\bigl) '
=W^{1,q'}_{\mathcal D}(\Omega) + L^{p'}(\Omega),
\]
cf.\ \cite{BL} Theorem~2.7.1.
Let $F \in (W^{-1,q}_{\mathcal D}(\Omega)\cap L^p(\Omega))'
= W^{1,q'}_{\mathcal D}(\Omega)+ L^{p'}(\Omega)$ 
and suppose that $F(u) = 0$ for all $u \in C_c^\infty(\Omega)$.
Then $F \in L^1(\Omega)$.
So $F = 0$.
Then the statement follows from the Hahn--Banach theorem.
\end{proof}

For all $p \in [1,\infty)$ let $S^{(p)}$ be the semigroup on $L^p(\Omega)$ 
as in Theorem~\ref{t-consistlp}\ref{t-consistlp-2} (assuming the conditions 
of that theorem are satisfied).

\begin{theorem}\label{t-coincid}
Assume that the boundary around any point 
$x \in \overline {\partial \Omega \setminus \mathcal D}$ 
admits a bi-Lipschitzian boundary chart.
Then
the semigroup $\widetilde S^{(2)}$
is consistent with the semigroup $S^{(p)}$ for every $ p \in [1,\infty)$. 
\end{theorem}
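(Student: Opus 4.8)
The plan is to route the desired consistency through the $L^2$-semigroup $S^{(2)}$, which is already known to be consistent with \emph{both} of the other two semigroups, and then to extend from a common dense subspace by means of \lemref{lcoic211}.

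First I would set up the framework for \lemref{lcoic211}. Both $W^{-1,2}_{\mathcal D}(\Omega)$ and $L^p(\Omega)$ embed continuously into the distributions on $\Omega$, so $(W^{-1,2}_{\mathcal D}(\Omega), L^p(\Omega))$ is an interpolation couple, and the test functions $C_c^\infty(\Omega)$ sit inside the intersection $W^{-1,2}_{\mathcal D}(\Omega) \cap L^p(\Omega)$. By \lemref{l-embeddens} applied with $q = 2$, the subspace $C_c^\infty(\Omega)$ is moreover \emph{dense} in this intersection. This provides exactly the dense set $D$ required by \lemref{lcoic211}, which I would apply with $T_0 = \widetilde S^{(2)}_t$ on $W^{-1,2}_{\mathcal D}(\Omega)$ and $T_1 = S^{(p)}_t$ on $L^p(\Omega)$, for each fixed $t > 0$.

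The heart of the argument is to check that $\widetilde S^{(2)}_t$ and $S^{(p)}_t$ agree on $C_c^\infty(\Omega)$. Fix $t > 0$ and $\varphi \in C_c^\infty(\Omega)$. Since $\Omega$ is bounded, $\varphi \in L^2(\Omega) \cap L^p(\Omega)$, and via the embedding $L^2(\Omega) \hookrightarrow W^{-1,2}_{\mathcal D}(\Omega)$ it also lies in $L^2(\Omega) \cap W^{-1,2}_{\mathcal D}(\Omega)$. I would then invoke the two consistency statements already established: \thmref{t-consistlp}\ref{t-consistlp-1} gives $\widetilde S^{(2)}_t \varphi = S^{(2)}_t \varphi$, while \thmref{t-consistlp}\ref{t-consistlp-2} gives $S^{(2)}_t \varphi = S^{(p)}_t \varphi$. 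Chaining these two identities yields $\widetilde S^{(2)}_t \varphi = S^{(p)}_t \varphi$. Applying \lemref{lcoic211} then delivers consistency of $\widetilde S^{(2)}_t$ and $S^{(p)}_t$ for every $t > 0$, which is precisely the consistency of the two semigroups.

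The step I expect to require the most care is the chaining itself. The two equalities above are, a priori, statements in different pairings — one inside $W^{-1,2}_{\mathcal D}(\Omega)$ and one inside the sum space of the couple $(L^2(\Omega), L^p(\Omega))$ — so I must verify that the bridging element $S^{(2)}_t \varphi$ genuinely belongs to $L^2(\Omega) \cap L^p(\Omega) \cap W^{-1,2}_{\mathcal D}(\Omega)$ and that all three identifications are carried out compatibly inside a single ambient space of distributions on $\Omega$. Once this single-ambient-space bookkeeping is in place, the remainder is the routine verification of the hypotheses of \lemref{lcoic211}.
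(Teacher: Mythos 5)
Your proposal is correct and follows exactly the paper's own argument: chain the two consistency statements of \thmref{t-consistlp} on $C_c^\infty(\Omega)$ to get $\widetilde S^{(2)}_t u = S^{(2)}_t u = S^{(p)}_t u$, then conclude via \lemref{l-embeddens} (with $q=2$) and \lemref{lcoic211}. The only difference is that you spell out the single-ambient-space bookkeeping that the paper leaves implicit.
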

\begin{proof}
Let $t > 0$.
If $u \in C_c^\infty(\Omega)$, then 
$\widetilde S^{(2)}_t u = S^{(2)}_t u = S^{(p)}_t u$ by Theorem~\ref{t-consistlp}.
Now the result follows from Lemmas~\ref{lcoic211} and \ref{l-embeddens}.
\end{proof}

There is also a version for $W^{-1,q}_{\mathcal D}(\Omega)$
with $q \in [2,\infty)$ under slightly more assumptions.

\begin{theorem}\label{tcoin405}
Suppose that 
\begin{itemize}
\item
the boundary around any point 
$x \in \overline {\partial \Omega \setminus \mathcal D}$ 
admits a bi-Lipschitzian boundary chart,
\item
the set $\cd$ is a $(d-1)$-set in the sense of Jonsson--Wallin \cite{JW} Chapter~II
and
\item
$\Omega$ is a $d$-set in the sense of Jonsson--Wallin, 
or for all $x \in \Omega$ the matrix $\mu(x)$ is symmetric.
\end{itemize}
Let $q \in [2,\infty)$.
Define the operator $\widetilde \ca_q$ in $W^{-1,q}_{\mathcal D}(\Omega)$ by
\[
D(\widetilde \ca_q)
= \{ \psi \in W^{-1,q}_{\mathcal D}(\Omega) \cap W^{1,2}_{\mathcal D}(\Omega) 
      : \ca \psi \in W^{-1,q}_{\mathcal D}(\Omega) \}
\] 
and $\widetilde \ca_q = \ca|_{D(\widetilde \ca_q)}$.
Then $- \widetilde \ca_q$ generates a holomorphic semigroup on $W^{-1,q}_{\mathcal D}(\Omega)$
which is consistent with the semigroup $S^{(p)}$ for all $p \in [1,\infty)$.
\end{theorem}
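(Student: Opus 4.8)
The plan is to split the assertion into two independent parts: the \emph{generation} of a holomorphic semigroup on $W^{-1,q}_{\mathcal D}(\Omega)$, and its \emph{consistency} with the scale $\bigl(S^{(p)}\bigr)_{p\in[1,\infty)}$. I would dispose of the consistency exactly as in the proof of \thmref{t-coincid}, so that all the genuinely new work is concentrated in the generation claim.

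For the generation step the idea is that the holomorphic semigroup $\widetilde S^{(2)}$ on $W^{-1,2}_{\mathcal D}(\Omega)$ leaves the smaller space $W^{-1,q}_{\mathcal D}(\Omega)$ invariant (recall that $W^{-1,q}_{\mathcal D}=\bigl(W^{1,q'}_{\mathcal D}\bigr)' \subset W^{-1,2}_{\mathcal D}$ for $q\ge 2$ on a bounded domain) and restricts there to a bounded holomorphic family. Once this invariance, together with strong continuity and the sectorial bound on $W^{-1,q}_{\mathcal D}$, is available, the generator of the restricted semigroup is precisely the part of $\ca$ in $W^{-1,q}_{\mathcal D}$; by the description of intersection domains in \proref{pcoin102}\ref{pcoin102-2} this part is exactly the operator $\widetilde\ca_q$ of the statement. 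The invariance and the holomorphic bound on $W^{-1,q}_{\mathcal D}$ for every $q\in[2,\infty)$ are where the three structural hypotheses enter: the bi-Lipschitz charts near $\overline{\partial\Omega\setminus\mathcal D}$, the $(d-1)$-set condition on $\mathcal D$, and the $d$-set/symmetry alternative for $\mu$ are exactly the conditions under which the associated elliptic operator enjoys the Sobolev mapping and heat-kernel estimates established in the authors' earlier work (compare \cite{ERe1} and \cite{HHKRZ}). I would invoke those results to obtain the holomorphic generation on $W^{-1,q}_{\mathcal D}$.

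For the consistency step I would argue verbatim as in \thmref{t-coincid}. Write $\widetilde S^{(q)}$ for the semigroup generated by $-\widetilde\ca_q$. Since $\widetilde\ca_q$ is the part of $\ca$ in $W^{-1,q}_{\mathcal D}$, its resolvent is the restriction of the resolvent of the $W^{-1,2}_{\mathcal D}$-realisation, so $\widetilde S^{(q)}$ and $\widetilde S^{(2)}$ are consistent by \lemref{lcoin101}. Hence for $u\in C_c^\infty(\Omega)$ and $t>0$ one has the chain $\widetilde S^{(q)}_t u=\widetilde S^{(2)}_t u=S^{(2)}_t u=S^{(p)}_t u$, using \thmref{t-consistlp}\ref{t-consistlp-1} for the middle equality and \thmref{t-consistlp}\ref{t-consistlp-2} for the last. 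Because $C_c^\infty(\Omega)$ is dense in $W^{-1,q}_{\mathcal D}(\Omega)\cap L^p(\Omega)$ by \lemref{l-embeddens}, and $\widetilde S^{(q)}_t$, $S^{(p)}_t$ are bounded on the two members of the interpolation couple $\bigl(W^{-1,q}_{\mathcal D}(\Omega),L^p(\Omega)\bigr)$, \lemref{lcoic211} yields the consistency of $\widetilde S^{(q)}_t$ and $S^{(p)}_t$ for each $t>0$, and thus of the two semigroups.

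The main obstacle is the generation/invariance step for the full range $q\in[2,\infty)$: unlike the consistency, it cannot be produced by the abstract interpolation machinery of Sections~\ref{Scoin2}--\ref{Scoin3}, because complex interpolation between $L^p$ and $W^{-1,2}_{\mathcal D}$ only yields Bessel spaces of fractional smoothness strictly between $-1$ and $0$ and never reaches the integer-order space $W^{-1,q}_{\mathcal D}$ with $q>2$. It therefore rests on the concrete elliptic regularity and heat-kernel estimates attached to the three geometric hypotheses, which is precisely why these assumptions are strengthened relative to \thmref{t-coincid}.
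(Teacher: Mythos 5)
Your proposal matches the paper's proof in structure and substance: the paper likewise outsources the generation claim and the consistency of $\widetilde S^{(q)}$ with $\widetilde S^{(2)}$ to prior work (specifically \cite{DisserterElstRehberg2}, Lemma~6.9(c) and the paragraph preceding it, rather than \cite{ERe1} and \cite{HHKRZ} as you guessed), and then concludes exactly as you do via the chain $\widetilde S^{(q)}_t u=\widetilde S^{(2)}_t u=S^{(p)}_t u$ on $C_c^\infty(\Omega)$ together with Lemmas~\ref{lcoic211} and~\ref{l-embeddens}. Your reconstruction of the generation step as ``invariance plus part operator'' is a reasonable account of what the cited lemma provides, so the argument is essentially the same.
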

\begin{proof}
It follows from \cite{DisserterElstRehberg2} Lemma~6.9(c) that 
$- \widetilde \ca_q$ generates a holomorphic semigroup on $W^{-1,q}_{\mathcal D}(\Omega)$.
Denote this semigroup by $\widetilde S^{(q)}$. 
Then $\widetilde S^{(q)}$ is consistent with $\widetilde S^{(2)}$ by the 
paragraph before Lemma~6.9 in \cite{DisserterElstRehberg2}.
Hence if $t > 0$ and $u \in C_c^\infty(\Omega)$, then 
$\widetilde S^{(q)}_t u = \widetilde S^{(2)}_t u = S^{(p)}_t u$.
Finally use again Lemmas~\ref{lcoic211} and \ref{l-embeddens}.
\end{proof}

\subsection*{Acknowledgements}
Part of this work is supported by the Marsden Fund Council from Government funding,
administered by the Royal Society of New Zealand.

\small 

\noindent
{\sc A.F.M. ter Elst,
Department of Mathematics,
University of Auckland,
Private bag 92019,
Auckland 1142,
New Zealand}  \\
{\em E-mail address}\/: {\bf terelst@math.auckland.ac.nz}

\mbox{}

\noindent
{\sc J. Rehberg,
Weierstrass Institute for Applied Analysis and Stochastics,
Mohrenstr.~39, 
10117 Berlin, 
Germany}  \\
{\em E-mail address}\/: {\bf rehberg@wias-berlin.de}


\begin{thebibliography}{HHKRZ}

\bibitem[Ama]{Ama}
{\sc Amann, H.}, Dual semigroups and second order linear elliptic boundary
  value problems.
\newblock {\em Isreal J. Math.} {\bf 45} (1983),  225--254.

\bibitem[AE]{AmE}
{\sc Amann, H. {\rm and} Escher, J.}, Strongly continuous dual semigroups.
\newblock {\em Ann. Mat. Pura Appl. (4)} {\bf 171} (1996),  41--62.

\bibitem[Are]{Are2}
{\sc Arendt, W.}, Gaussian estimates and interpolation of the spectrum in
  $L^p$.
\newblock {\em Diff.\ Int.\ Equ.} {\bf 7} (1994),  1153--1168.

\bibitem[BL76]{BL}
{\sc Bergh, J. {\rm and} L{\"o}fstr{\"o}m, J.}, {\em Interpolation spaces. An
  introduction}.
\newblock Grund\-lehren der mathematischen Wissenschaften 223. Springer-Verlag,
  Berlin etc., 1976.

\bibitem[BMNR]{BonifaciusNeitzelRehberg}
{\sc Bonifacius, L., Meinlschmidt, H., Neitzel, I. {\rm and} Rehberg, J.},
 Global solutions for quasilinear parabolic equations in a scale of Banach spaces, 2017.
\newblock In preparation.

\bibitem[BB]{BB}
{\sc Butzer, P.~L. {\rm and} Berens, H.}, {\em Semi-groups of operators and
  approximation}.
\newblock Die Grundlehren der mathematischen Wissenschaften 145.
  Springer-Verlag, Berlin etc., 1967.

\bibitem[Cia]{cia}
{\sc Ciarlet, P.~G.}, {\em The finite element method for elliptic problems}.
\newblock Studies in Mathematics and its Applications 4. North-Holland,
  Amsterdam, 1978.

\bibitem[CS]{ClementSimonett}
{\sc Cl{\'e}ment, P. {\rm and} Simonett, G.}, Maximal regularity in continuous
  interpolation spaces and quasilinear parabolic equations.
\newblock {\em J. Evol. Equ.} {\bf 1} (2001),  39--67.

\bibitem[DER]{DisserterElstRehberg2}
{\sc Disser, K., Elst, A. F.~M. ter {\rm and} Rehberg, J.}, On maximal
  parabolic regularity for non-autonomous parabolic operators.
\newblock {\em J. Differential Equations} {\bf 262} (2017),  2039--2072.

\bibitem[ER]{ERe1}
{\sc Elst, A. F.~M. ter {\rm and} Rehberg, J.}, $L^\infty$-estimates for
  divergence operators on bad domains.
\newblock {\em Anal. and Appl.} {\bf 10} (2012),  207--214.

\bibitem[GGZ]{GGZ}
{\sc Gajewski, H., Gr{\"o}ger, K. {\rm and} Zacharias, K.}, {\em Nichtlineare
  Operatorgleichungen und Operatordifferentialgleichungen}.
\newblock Mathematische Lehr\-b{\"u}cher und Monographien, II. Abteilung
  Mathematische Monographien 38. Akademie-Verlag, Berlin, 1974.

\bibitem[GGKR]{GGKR}
{\sc Griepentrog, J.~A., Gr{\"o}ger, K., Kaiser, H.-C. {\rm and} Rehberg, J.},
  Interpolation for function spaces related to mixed boundary value problems.
\newblock {\em Math. Nachr.} {\bf 241} (2002),  110--120.

\bibitem[HDR]{HaR}
{\sc Haller-Dintelmann, R. {\rm and} Rehberg, J.}, Maximal parabolic regularity
  for divergence operators including mixed boundary conditions.
\newblock {\em J. Differential Equations} {\bf 247} (2009),  1354--1396.

\bibitem[HHKRZ]{HHKRZ}
{\sc Haller-Dintelmann, R., H{\"o}ppner, W., Kaiser, H.-C., Rehberg, J. {\rm
  and} Ziegler, G.~M.}, Optimal elliptic regularity in Sobolev spaces near
  three-dimensional multimaterial Neumann vertices.
\newblock {\em Funct. Anal. Appl.} {\bf 48} (2014),  208--222.

\bibitem[HR]{HiebR}
{\sc Hieber, M. {\rm and} Rehberg, J.}, Quasilinear parabolic systems with
  mixed boundary conditions on nonsmooth domains.
\newblock {\em SIAM J. Math. Anal.} {\bf 40} (2008),  292--305.

\bibitem[JW]{JW}
{\sc Jonsson, A. {\rm and} Wallin, H.}, Function spaces on subsets of ${\bf
  R}^n$.
\newblock {\em Math. Rep.} {\bf 2}, No.\ 1 (1984).

\bibitem[Lio]{Lio3}
{\sc Lions, J.~L.}, {\em Contr\^ole optimal de syst\`emes gouvern\'es par des
  \'equations aux d\'eriv\'ees partielles}.
\newblock Dunod/Gauthier-Villars, Paris, 1968.

\bibitem[Lun]{Lun}
{\sc Lunardi, A.}, {\em Analytic semigroups and optimal regularity in parabolic
  problems}, vol.\ 16 of Progress in Nonlinear Differential Equations and their
  Applications.
\newblock Birkh{\"a}user Verlag, Basel, 1995.

\bibitem[MERS]{MERS}
{\sc Maz'ya, V., Elschner, J., Rehberg, J. {\rm and} Schmidt, G.}, Solutions
  for quasilinear nonsmooth evolution systems in $L^p$.
\newblock {\em Arch. Ration. Mech. Anal.} {\bf 171} (2004),  219--262.

\bibitem[Ouh]{Ouh5}
{\sc Ouhabaz, E.-M.}, {\em Analysis of heat equations on domains}, vol.\ 31 of
  London Mathematical Society Monographs Series.
\newblock Princeton University Press, Princeton, NJ, 2005.

\bibitem[Tri]{Tri}
{\sc Triebel, H.}, {\em Interpolation theory, function spaces, differential
  operators}.
\newblock North-Holland, Amsterdam, 1978.

\bibitem[Zan]{Zanger}
{\sc Zanger, D.}, The inhomogeneous Neumann problem in Lipschitz domains.
\newblock {\em Comm. Partial Differential Equations} {\bf 25} (2000),
  1771--1808.

\end{thebibliography}
\end{document}